\newtheorem{thm}{Theorem}[section]
\newtheorem{prop}[thm]{Proposition}
\newtheorem{lemma}[thm]{Lemma}
\newtheorem{cor}[thm]{Corollary}
\newtheorem{conj}[thm]{Conjecture}
\theoremstyle{definition}
\newtheorem{defin}[thm]{Definition}
\theoremstyle{remark}
\newtheorem{rmk}[thm]{Remark}
\numberwithin{equation}{section}
\newcommand{\GL}{\operatorname{GL}}
\newcommand{\PGL}{\operatorname{PGL}}
\newcommand{\SL}{\operatorname{SL}}
\newcommand{\Sp}{\operatorname{Sp}}
\newcommand{\bbG}{\mathbb G}
\newcommand{\Gal}{\operatorname{Gal}}
\newcommand{\cha}{\operatorname{char}}
\newcommand{\Spec}{\operatorname{Spec}}
\newcommand{\Tors}{\operatorname{Tors}}
\newcommand{\trdeg}{\operatorname{trdeg}}
\renewcommand{\phi}{\varphi}
\newcommand{\ed}{\operatorname{ed}}
\newcommand{\too}{\longrightarrow}
\def\cftil#1{\ifmmode\setbox7\hbox{\accent"5E#1}\else\setbox7\hbox{\accent"5E#1}\penalty 10000\relax\fi\raise 1\ht7\hbox{\lower1.1ex\hbox to 1\wd7{\hss\accent"7E\hss}}\penalty 10000\hskip-1\wd7\penalty 10000\box7 }
\begin{document} 
\title[Special groups, versality and the Grothendieck-Serre Conjecture]{Special groups, versality and the Grothendieck-Serre Conjecture}

\author{Zinovy Reichstein}
\address{Department of Mathematics\\
 University of British Columbia\\ 
 Vancouver, BC V6T 1Z2\\Canada}
\email{reichst@math.ubc.ca}
\thanks{Reichstein was partially supported by National Sciences and Engineering Research Council of
 Canada Discovery grant 253424-2017.}
\author{Dajano Tossici}
\address{Univ. Bordeaux, CNRS, Bordeaux INP, IMB, UMR 5251,  F-33400, Talence, France}
\email{dajano.tossici@u-bordeaux.fr}
\thanks{Tossici was partially supported by the ANR CLap-CLap project, grant
ANR-18-CE40-0026-01 of the French Agence Nationale de la Recherche.}

\subjclass[2010]{20G10, 20G15, 20G35}
	
\keywords{Algebraic group, torsor, special group, local ring, Grothendieck-Serre conjecture, essential dimension}

\begin{abstract} Let $k$ be a base field and $G$ be an algebraic group over $k$. J.-P. Serre
defined $G$ to be special if every $G$-torsor $T \to X$ is locally trivial in the Zariski topology for every 
reduced algebraic variety $X$ defined over $k$. In recent papers an a priori weaker condition is used: $G$ is called 
special if every $G$-torsor 
$T \to \Spec(K)$ is split for every field $K$ containing $k$. We show that these two definitions are equivalent.
We also generalize this fact and propose a strengthened version of the Grothendieck-Serre conjecture based on the notion of essential dimension.
\end{abstract}

\maketitle

\section{Introduction}

Let $k$ be a base field and $G$ be an algebraic group (i.e., a group scheme of finite type) over $k$.  
Let $X$ be a $k$-scheme.  A morphism $T \to X$ is a pseudo $G$-torsor if $T$ is
 equipped with a (left) action of $G$ such that the mapping $G
 \times_X T \to T \times_X T$ given by $(g, x) \mapsto (x, g \cdot x)$
is an isomorphism.
A pseudo $G$-torsor $T$ is {\em a fppf $G$-torsor} 
if it is locally trivial in the 
fppf topology, i.e., if there exists a faithfully flat morphism $X' \to X$, of $k$-schemes,
locally finitely presented, such that $T \times_X X' \cong G \times_k X'$. 
We will denote the set of isomorphism classes of fppf $G$-torsors over $X$ by $\Tors(X,G)$. 
This set has a marked element, represented by the split fppf $G$-torsor $G \times_k X \to X$.

The pointed set $\Tors(X,G)$ is  contained in the \v{C}ech cohomology pointed set $H^1(X,G)$, computed 
in the fppf topology.
If $G$ is affine or if $G$ is smooth and $\dim(X) \leqslant 1$, then $\Tors(X,G)$ coincides with $H^1(X,G)$ (see \cite[Theorem 4.3 and Proposition 4.6]{milne}). 

If $A$ is a commutative $k$-algebra we will write $\Tors(A,G)$ and $H^1(A,G)$ in place of $\Tors(\Spec A, G)$  
and $H^1(\Spec A,G)$, respectively.   

One can also define \'etale $G$-torsors (resp. Zariski $G$-torsors) by replacing the fppf topology with the \'etale topology (resp. Zariski topology). Clearly a Zariski torsor is an \'etale torsor and an \'etale torsor is an fppf torsor.
For smooth algebraic groups fppf torsors and \'etale torsors coincide. In the sequel we will be primarily interested in
fppf torsors and will often abbreviate ``fppf $G$-torsor" to simply ``$G$-torsor".

In a foundational paper~\cite{serre-special} (reprinted in~\cite{serre-reprinted}), J.-P. Serre
defined a smooth algebraic group $G$ to be special if every $G$-torsor $T \to X $ over a reduced 
algebraic variety 
$X$ (i.e., a reduced separated scheme of finite type over $k$) is a Zariski torsor over $X$. 
His exact definition reads as follows: \textit{$G$ est sp\'ecial si tout fibr\'e principal de groupe $G$ est localement trivial}. In our terminology, 
`fibr\'e principal' means 
\'etale torsor (equivalently, `fppf torsor' since $G$ is smooth) and `localement trivial' 
means Zariski torsor. It is easy to see that this condition on $G$ is equivalent to the following
\begin{equation} \label{e.serre-special}
\text{$\Tors(R,G)=1$ for every local ring $R$ containing $k$,}
\end{equation}
Note that Serre assumed that the base field is algebraically closed, but his definition of special group remains valid and equivalent to~\eqref{e.serre-special} over any field; see Remark~\ref{rem.finite-type}.
Subsequently A.~Grothendieck classified special semisimple groups over an algebraically closed field; 
see~\cite[Theorem 3]{grothendieck-special}.

There has been renewed interest in special groups in recent years. However, many recent papers use an a priori different definition: they define an algebraic group $G$ 
to be special if $\Tors(K, G) = 1$ for every field $K$ containing $k$.\footnote{This condition forces $G$ to be smooth, affine and connected; see~Proposition~\ref{prop.non-linear}.} 
Some of these papers, e.g.,~\cite{reichstein} or~\cite{huruguen}, appeal to Grothendieck's classification, which is based on the Serre's definition of special group.
We will show that this discrepancy does not cause any problems because the classical and the modern definitions 
of special group are, in fact, equivalent. 

\begin{thm} \label{thm.main1} Let $G$ be an algebraic group defined over a field $k$. 
Then the following conditions are equivalent:

\smallskip
(1) $\Tors(  K, G)=1$ for every field $K$ containing $k$,

\smallskip
(2) $\Tors(  R, G) = 1$ for any local ring $R$ containing $k$,

\smallskip
(3) $\Tors(  S, G) = 1$ for any semi-local ring $S$ containing $k$.

\end{thm}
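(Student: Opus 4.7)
The implications (3) $\Rightarrow$ (2) $\Rightarrow$ (1) are immediate, since every field is a local ring with maximal ideal $(0)$ and every local ring is semi-local. The content of the theorem is the converse direction (1) $\Rightarrow$ (3), which also gives (1) $\Rightarrow$ (2) as a special case.

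To prove (1) $\Rightarrow$ (3), the first move is to note that by Proposition~\ref{prop.non-linear} (referenced in the introduction), hypothesis (1) forces $G$ to be smooth, affine, and connected; I would fix this assumption throughout. Next, given a semi-local $k$-algebra $S$ with maximal ideals $\mathfrak{m}_1, \ldots, \mathfrak{m}_n$ and a $G$-torsor $T \to \Spec S$, I would apply a standard limit argument (valid because $G$, and hence $T$, is of finite presentation over $k$) to descend $T$ to a $G$-torsor $T_A$ over some finitely generated $k$-subalgebra $A \subseteq S$. Semi-localizing $A$ at the primes $\mathfrak{p}_i := A \cap \mathfrak{m}_i$ yields a semi-local ring $A'$ essentially of finite type over $k$ with $A \to A' \hookrightarrow S$, and it suffices to trivialize $T_A \otimes_A A'$ over $\Spec A'$.

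The core step is to trivialize this last torsor using only hypothesis (1). Each residue field $\kappa(\mathfrak{p}_i)$ contains $k$, so by (1) the torsor is trivial over each $\kappa(\mathfrak{p}_i)$, yielding sections $\bar s_i$ at the closed points. Smoothness of $G$ and Hensel's lemma lift each $\bar s_i$ to a section over the Henselian local ring $(A'_{\mathfrak{p}_i})^h$, and taken together these give a section of $T_A$ over the semi-local Henselization $(A')^h$. The final step is to descend this section from $(A')^h$ to $A'$ itself. Since $T_A$ is affine and finitely presented over $A'$, sections over $A'$ correspond to solutions of a finitely presented system of polynomial equations; descent of the Henselian section to $A'$ should then follow from an approximation theorem applicable because $A'$ is excellent (as it is essentially of finite type over a field).

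\textbf{Main obstacle.} The reductions to the essentially finite type setting and the use of Hensel's lemma at each closed point are entirely formal. The real work lies in the descent from $(A')^h$ down to $A'$ itself: while hypothesis (1) is deployed at the residue fields to produce fiberwise trivializations, converting these into a global trivialization is the delicate step, and this is where the full strength of (1) across \emph{all} fields containing $k$ (together with the excellence of $A'$) is expected to come into play. An alternative, possibly cleaner, route would go through the versal-torsor formalism tied to essential dimension, which the abstract signals is central to the paper, and which might furnish a direct argument bypassing approximation altogether.
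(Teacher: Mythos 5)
The easy implications and your reduction to a semi-local ring essentially of finite type over $k$ are fine, but the core of the argument collapses at exactly the point you flag. Having trivialized the torsor over the residue fields $\kappa(\mathfrak p_i)$ and lifted those sections to the Henselizations, you propose to descend the resulting section from $(A')^h$ to $A'$ by ``an approximation theorem''. No such theorem exists in that direction: Artin approximation passes from the completion \emph{down to} the Henselization of an excellent local ring, not from the Henselization down to the ring itself. Note that your argument uses hypothesis (1) only to split the torsor over the residue fields of the closed points; if it worked, it would therefore prove that for \emph{every} smooth affine $G$, every $G$-torsor over the local ring $\mathcal O_{X,p}$ of a variety at a closed point with $\kappa(p)=k=\overline{k}$ is trivial. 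This is false: for a smooth projective surface $X$ over $\mathbb C$ with nontrivial Brauer group, a nonzero class in $\mathrm{Br}(X)\subseteq \mathrm{Br}(\mathcal O_{X,p})$ gives a nontrivial $\PGL_n$-torsor over $\mathcal O_{X,p}$ whose restriction to the residue field is of course trivial. So the step you call ``delicate'' is not a technical descent issue that excellence can resolve; it is the entire content of the theorem, and fibrewise trivializations at closed points cannot supply it.

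The paper instead uses hypothesis (1) at the \emph{generic} point, along the lines of the versal-torsor alternative you mention at the end. One embeds $G\hookrightarrow\GL_n$ and uses that $\GL_n$ is (3)-special (projective modules over semi-local rings are free) to realize any $G$-torsor over $\Spec S$ as the pullback of $\GL_n\to G\backslash\GL_n$ along some morphism $\Spec S\to G\backslash\GL_n$; hypothesis (1) splits this torsor over a dense open $X_0\subseteq G\backslash\GL_n$, and one then translates by a suitable element of $\GL_n(k)$ --- using density of $\GL_n(k)$ for $k$ infinite together with Kleiman transversality --- to move the images of the closed points of $\Spec S$ into $X_0$. For finite $k$ this density argument fails, and the paper resorts to a structural proof (Huruguen's classification of quasi-split special reductive groups, reduction to $R_{l/k}(\SL_m)$, $R_{l/k}(\Sp_{2m})$ and special tori, plus the split unipotent radical); your proposal does not address the finite-field case at all.
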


In the sequel, we will say that $G$ is ``(1)-special" if it satisfies (1), ``(2)-special" if it satisfies (2) and ``(3)-special" if it satisfies (3). 

\smallskip
The following conjecture arose in the above-mentioned classical papers;
see \cite[Section 5.5, Remark]{serre-special} and \cite[Remark 3, pp.~26-27]{grothendieck-special}.
It was presumably motivated by the difference between (1) and (2).

\begin{conj}[Grothendieck-Serre Conjecture] \label{conj.g-s}
Let $R$ be a regular local ring containing $k$ and $G$ be a smooth reductive algebraic group over $k$.
Then the natural morphism $H^1(R, G) \to H^1(K, G)$ has trivial kernel. 
\end{conj}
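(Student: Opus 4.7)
The plan is to combine devissage on the group $G$ with geometric patching arguments on $\Spec R$. Since $G$ is reductive, the standard reductions via the connected radical and the simply connected cover allow one (after the usual twisting trick on representative cocycles) to reduce the injectivity of $H^1(R,G) \to H^1(K,G)$ to the two extreme cases: $G$ is a torus, and $G$ is a simply connected semisimple group. The intermediate finite-group contributions, coming from the kernel $\mu$ of the universal cover, can be absorbed by working one layer at a time in the long exact cohomology sequence.

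For a torus $T$, I would use a flasque resolution $1 \to T \to F \to P \to 1$ with $P$ quasi-trivial and $F$ flasque. Since $R$ is a regular local ring one has $\on{Pic}(R) = 0$, hence $H^1(R,\G_m) = 0$; together with the vanishing of $H^1$ for quasi-trivial tori, this reduces the problem to injectivity for flasque $F$, which can then be handled using the rigidity of flasque tori over regular local rings.

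For $G$ simply connected semisimple, the strategy I would pursue is the geometric presentation approach of Gabber--Quillen--Panin: assuming $R$ is essentially smooth over $k$, realize $\Spec R$ as the local ring at a closed point $x$ of a smooth $k$-variety $X$, and extend a $G$-torsor $T$ that is trivial at the generic point to a torsor over a relative affine curve $C \to \Spec R$ containing a section through $x$. With a careful choice of $C$, a patching argument along a well-chosen finite cover transfers generic triviality of $T$ to triviality at $x$. The general equicharacteristic case would then follow from Popescu's theorem, writing $R$ as a filtered colimit of smooth $k$-algebras together with a limit argument on torsors.

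The main obstacle, and the reason the conjecture remains open in full generality, is the mixed characteristic case, where Popescu's theorem does not reduce one to smooth algebras over a field and the Gabber-type presentation lemmas are substantially harder to set up; here one must work intrinsically with the arithmetic of the two-dimensional regular ring $R$ and exploit the geometry of $\Spec R$ over a discrete valuation ring. A secondary difficulty is handling anisotropic forms of $G$ over $R$, since the patching argument ultimately relies on producing rational points on twisted flag varieties, which is already delicate at the closed fiber of $R$.
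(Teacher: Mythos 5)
This statement appears in the paper as a conjecture, not a theorem: the paper offers no proof of it, and instead records where the known cases are established in the literature (Colliot-Th\'el\`ene--Ojanguren for infinite perfect $k$, Fedorov--Panin for arbitrary infinite $k$, and Panin's work for finite $k$). Your text is likewise not a proof but an outline of the strategies used in those papers, and you concede as much when you write that ``the conjecture remains open in full generality.'' Each of your three steps --- the d\'evissage to tori and simply connected groups, the flasque-resolution argument for tori, and the Gabber--Quillen--Panin geometric presentation --- names a known technique without supplying the content that makes it work. For the d\'evissage, the nonabelian $H^1$'s are only pointed sets and the twisting must be done by torsors over $R$, not over $k$, so ``absorbing the finite-group contributions one layer at a time'' is exactly the delicate point, not a routine reduction. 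For the simply connected case, the presentation lemma and the patching along a relative affine curve with a section are where essentially all of the difficulty of Fedorov--Panin resides; nothing in your sketch could be verified or completed without reproducing their argument wholesale.

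There is also a factual misdirection: your stated ``main obstacle'' is mixed characteristic, but the hypothesis of the statement is that $R$ contains the field $k$, so the setting is entirely equicharacteristic. In that setting, and for $G$ defined over $k$ as here, the conjecture is a theorem by the works cited above, with Popescu's theorem reducing the general case to essentially smooth $k$-algebras exactly as you indicate; the genuinely open problems concern group schemes over $R$ in mixed characteristic, which is a different statement. Finally, $R$ is an arbitrary regular local ring, not a two-dimensional one, so ``the arithmetic of the two-dimensional regular ring $R$'' is not the right picture of where the remaining difficulties lie.
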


In the case, where $k$ is an infinite perfect field, Conjecture~\ref{conj.g-s} was proved 
by J.-L. Colliot-Th\'el\`ene and M.~Ojanguren~\cite[Theorem 3.2]{cto} for any smooth linear algebraic group
(not necessarily reductive). In the case, where $k$ is an arbitrary infinite field, it due 
to R.~Fedorov and I.~Panin~\cite{fedorov-panin}; in fact, Fedorov and Panin allow $R$ to be an arbitrary semi-local ring.
Panin~\cite{panin2, panin3} recently announced a proof in the case where $k$ is finite 
(also with $R$ an arbitrary semi-local ring)\footnote{The papers \cite{fedorov-panin, panin2, panin3} address
a stronger version of the Grothendieck-Serre conjecture, 
posed in~\cite[Remark 11.1a]{grothendieck-brauer2}, where $G$ is assumed to be a group scheme over $R$. 
We will only be interested in the ``constant case", where $G$ is defined over $k$.  To the best of our knowledge, 
for a finite base field $k$, even the constant case of the Grothendieck-Serre conjecture was open prior to Panin's work.}.

Our proof of Theorem~\ref{thm.main1} does not rely on the Grothendieck-Serre conjecture. In the case, 
where $k$ is infinite, we deduce it from Theorem~\ref{thm.main2} below. The proof of Theorem~\ref{thm.main2} 
is short and self-contained; see Section~\ref{sect.inf}. In the case where $k$ is finite, our proof of Theorem~\ref{thm.main1}
relies on recent work of M.~Huruguen; see Section~\ref{sect.finite}. In order to state Theorem~\ref{thm.main2} we shall need the following definition.

\begin{defin}
Let $G$ be a linear algebraic group over a field $k$. We will say that a $G$-torsor $\tau \colon V \to Y$ is weakly (1)-versal if
every $G$-torsor $\tau_1 \colon T_1 \to \Spec(K)$ over an infinite field $K$ containing $k$ can be obtained as a pull-back from $\tau$
via some morphism $\Spec(K) \to Y$. In other words, there exists a Cartesian diagram of $k$-morphisms
\[ \xymatrix{   T_1 \ar@{->}[d]_{\tau_1}  \ar@{->}[rr] &  & V \ar@{->}[d]^{\tau}  \\
                 \Spec(K)  \ar@{->}[rr] & & Y. }  
\]
Here $Y$ is an integral scheme of finite type over $k$. Similarly, we will say that the $G$-torsor $\tau \colon V \to Y$ is weakly (2)-versal (respectively, weakly (3)-versal)
if every $G$-torsor $\tau_2 \colon T_2 \to \Spec(R)$ (respectively, $\tau_3 \colon T_3 \to \Spec(S)$) over a local ring $R$ (respectively, a semi-local ring $S$)
containing $k$ can be obtained from $\tau$ by pull-back via a morphism $\Spec(R) \to Y$ (respectively, $\Spec(S) \to Y$).
Finally, for $n = 1, 2, 3$, we will say that $\tau \colon V \to Y$ is $(n)$-versal if the restriction of $\tau$ to every dense open subscheme of $Y$ is weakly $(n)$-versal.
\end{defin}

The notion of (1)-versality was studied in~\cite{serre-gc} and~\cite{dr} under the name of ``versality".

\begin{thm} \label{thm.main2} Let $G$ be a linear algebraic group over an infinite field $k$, $Y$ be an integral scheme of finite type over $k$,
and $\tau \colon V \to Y$ be a $G$-torsor. Then 

\smallskip
(a) $\tau$ is weakly (1)-versal $\Longleftrightarrow$ $\tau$ is weakly (2)-versal $\Longleftrightarrow$ $\tau$ is weakly (3)-versal. 

\smallskip
(b) $\tau$ is (1)-versal $\Longleftrightarrow$ $\tau$ is (2)-versal $\Longleftrightarrow$ $\tau$ is (3)-versal. 
\end{thm}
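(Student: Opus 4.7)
The implications weakly $(3)$-versal $\Rightarrow$ weakly $(2)$-versal $\Rightarrow$ weakly $(1)$-versal, and likewise the strong versions in part (b), are immediate: every field is a local ring and every local ring is a semi-local ring, so any pullback statement for torsors over semi-local rings specializes to one over local rings, and then to one over fields. The substance is the converse, for which it suffices to prove weakly $(1)$-versal $\Rightarrow$ weakly $(3)$-versal in part (a), and $(1)$-versal $\Rightarrow$ $(3)$-versal in part (b).

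Fix a weakly $(1)$-versal torsor $\tau \colon V \to Y$ and a $G$-torsor $T_3 \to \Spec(S)$ over a semi-local $k$-algebra $S$ with maximal ideals $\mathfrak m_1, \dots, \mathfrak m_r$. I would first spread $T_3$ out: a standard limit argument produces a finitely generated $k$-subalgebra $A \subseteq S$, which may be assumed to be an integral domain, together with a $G$-torsor $T \to \Spec A$ whose pullback along $\Spec S \to \Spec A$ is $T_3$. Set $\mathfrak p_i := A \cap \mathfrak m_i$ and $K := \on{Frac} A$; this $K$ is infinite since $k$ is. Weak $(1)$-versality at $K$ then gives a morphism $f_\eta \colon \Spec K \to Y$ and an isomorphism $T_K \cong f_\eta^* V$, and standard spreading out extends this to a morphism $f \colon U \to Y$ on some dense open $U \subseteq \Spec A$ containing the generic point, with $T|_U \cong f^* V$. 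The composition $\Spec S \to \Spec A \to Y$ would then yield the desired morphism, provided that $\Spec S \to \Spec A$ factors through $U$; by prime avoidance this is equivalent to $\mathfrak p_i \in U$ for every $i$.

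The main obstacle is therefore to arrange that $U$ contains the finite set $\{\mathfrak p_1, \dots, \mathfrak p_r\}$. My approach is to recast the problem in terms of the twisted scheme $X := (V \times_k T)/G$, where $G$ acts diagonally, together with its projection $\pi \colon X \to \Spec A$. Sections of $\pi$ over an $A$-scheme $Z$ correspond bijectively to pairs (morphism $Z \to Y$, isomorphism $T|_Z \cong f^* V$), so it suffices to find a section of $\pi$ over a Zariski neighborhood of each $\mathfrak p_i$. The key structural fact is that the combined projection $X \to Y \times_k \Spec A$ is itself a $G$-torsor (with fibers isomorphic to $G$), so $\pi$ has geometrically integral fibers that are twists of $V$. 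Weak $(1)$-versality then supplies both a section of $\pi$ over $\eta$ (giving the extension over $U$) and, applied at each residue field $\kappa(\mathfrak p_i)$, a $\kappa(\mathfrak p_i)$-point of the fiber $X_{\mathfrak p_i}$. The crux is to combine these local pieces of data: by deforming the generic section within $X_\eta(K)$ — possible because $k$ is infinite, so rational points are dense in the fibers of $\pi$ — one produces a new generic section whose spreading-out neighborhood covers every prescribed $\mathfrak p_i$ simultaneously.

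For part (b) the same argument applies without change: $(1)$-versality of $\tau$ is by definition weak $(1)$-versality of $\tau|_W$ for every dense open $W \subseteq Y$, so the implication from part (a) applied to each such restriction yields weak $(3)$-versality of $\tau|_W$, which is precisely $(3)$-versality of $\tau$.
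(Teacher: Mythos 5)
Your reduction and the easy implications are fine, and you have correctly isolated the crux: after spreading out $T_3$ to a torsor over a finitely generated domain $A \subseteq S$ and applying weak (1)-versality at $K = \on{Frac}(A)$, the resulting morphism to $Y$ is only defined on a dense open $U \subseteq \Spec(A)$ that need not contain the primes $\mathfrak p_i$. But the step you offer to close this gap is asserted rather than proved, and I do not see how to make it work within your framework. You propose to ``deform the generic section within $X_\eta(K)$'', invoking density of rational points in the fibers of $\pi \colon X = (V \times_k T)/G \to \Spec(A)$. Two problems. First, the fibers of $\pi$ are twists of $V$, and $V$ is the total space of an arbitrary torsor over an arbitrary integral $Y$ of finite type; there is no reason for rational points to be dense in it or in its twists, and weak (1)-versality only guarantees that these fibers have \emph{a} rational point. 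Second, and more fundamentally, no group acts transitively on $X$ over $\Spec(A)$: the $G$-action moves points only along the fibers of $X \to Y \times_k \Spec(A)$, so ``deforming'' a section can at best change the trivialization data over a fixed map $\Spec(K) \to Y$; it cannot change that underlying map, and if that map fails to extend over $\mathfrak p_i$, no such deformation helps. Knowing that the generic fiber has a section and that each special fiber $X_{\mathfrak p_i}$ has a $\kappa(\mathfrak p_i)$-point does not produce one section over a neighborhood of all the $\mathfrak p_i$: the special-fiber points need not lift even formally (nothing forces $\pi$ to be smooth there), and nothing ties them to the generic section.

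The paper closes exactly this gap by routing the argument through a homogeneous space rather than through the twisted scheme. Embed $G$ in $\GL_n$ and let $\pi \colon \GL_n \to G \backslash \GL_n$ be the projection. Weak (1)-versality applied to the generic point of $G\backslash\GL_n$ gives a dense open $X_0 \subseteq G\backslash\GL_n$ over which $\pi$ is pulled back from $\tau$. Since $\GL_n$ is (3)-special, $H^1(S, \GL_n) = 1$, so $T_3$ admits a $G$-equivariant map $\alpha \colon T_3 \to \GL_n$; the only issue is that the induced map $\Spec(S) \to G\backslash\GL_n$ may send some closed points outside $X_0$. This is repaired by composing $\alpha$ with right translation by a general $g \in \GL_n(k)$: since $k$ is infinite, $\GL_n(k)$ is dense, and Kleiman's transversality theorem shows that a general translate moves the closures of the images of the finitely many closed points of $\Spec(S)$ off the complement of $X_0$. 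It is this transitive $\GL_n$-action --- absent from your $X$ --- that makes the general-position step legitimate. To salvage your approach you would essentially have to introduce $G \backslash \GL_n$ as an intermediary, at which point you have reproduced the paper's proof.
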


Let $L$ be a field containing $k$ and $\mu \colon T \to \Spec(L)$
be a $G$-torsor. We will say that $\mu$ descends to an intermediate subfield $k \subset L_0 \subset L$ if
$\mu$ is the pull-back of some $G$-torsor $\mu_0 \colon T_0 \to \Spec(L_0)$, i.e., if there exists a Cartesian diagram 
of the form
 \[ \xymatrix{ T \ar@{->}[r] \ar@{->}[d]^{\mu} & T_0 \ar@{->}[d]^{\mu_0}   \\
 \Spec(L) \ar@{->}[r]  &  \Spec(L_0). } \]
The essential dimension $\ed(\mu)$ of $\mu$ is the smallest value of the transcendence degree $\trdeg(L_0/k)$
such that $\mu$ descends to $L_0$. 
The essential dimension $\ed(G)$ of $G$ is the maximal value of
$\ed(\mu)$, as $K$ ranges over all fields containing $k$ and $\tau$ ranges over all $G$-torsors $T \to \Spec(K)$.
Sometimes we will write $\ed_k(\mu)$ in place of $\ed(\mu)$ to emphasize that this number depends on the base field $k$, and similarly
for $\ed_k(G)$. 
Note that $G$ is (1)-special if and only if $\ed(G) = 0$; see~Corollary~\ref{cor.special-ed}. 
For a detailed discussion of essential dimension and further references, see~\cite{reichstein-icm} or~\cite{merkurjev-survey}.

In Section~\ref{sect.cor.main3} we will prove the following corollary of Theorem~\ref{thm.main2}.

\begin{cor} \label{cor.main3} Let $k$ be an infinite field, $G$ be a linear algebraic group over $k$ of essential dimension $d$,
$S$ be a semi-local ring containing $k$ and $\tau \colon T \to \Spec(S)$ be a $G$-torsor. Then 
there exists a Cartesian diagram of $k$-morphisms
\[ \xymatrix{   T \ar@{->}[d]_{\tau}  \ar@{->}[rr] &  & W \ar@{->}[d]^{\nu}  \\
                 \Spec(S)  \ar@{->}[rr] & & Y, }  
\]
where $Y$ is a $d$-dimensional geometrically integral scheme of finite type over $k$, $\nu$ is a $G$-torsor and $W(k) \neq \emptyset$. 
\end{cor}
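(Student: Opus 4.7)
The plan is to reduce Corollary~\ref{cor.main3} to Theorem~\ref{thm.main2}(a). More precisely, I would construct a weakly $(1)$-versal $G$-torsor $\nu\colon W \to Y$ with $Y$ geometrically integral of dimension $d$ and $W(k) \neq \emptyset$; Theorem~\ref{thm.main2}(a) then upgrades this to weak $(3)$-versality, and applying the latter to $\tau$ produces the desired Cartesian diagram.

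To build $\nu$, choose a generically free linear representation $G \hookrightarrow \GL(V)$, let $U \subseteq V$ be an open $G$-invariant subscheme on which $G$ acts freely and admits a scheme-theoretic quotient $\pi\colon U \to U/G$, and recall that $\pi$ is $(1)$-versal in the sense of~\cite{serre-gc, dr}. Let $\eta$ be the generic point of $U/G$; by the theory of essential dimension (see e.g.~\cite{reichstein-icm, merkurjev-survey}), the generic fibre $\pi^{-1}(\eta) \to \Spec(k(U/G))$ has essential dimension equal to $\ed(G) = d$, so it descends to a $G$-torsor over an intermediate field $k \subseteq L_0 \subseteq k(U/G)$ with $\trdeg(L_0/k) = d$. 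Since $U \subseteq V \cong \A^n_k$ is geometrically integral, the extension $k(U/G)/k$ is regular, and regularity passes to the subextension $L_0/k$. Hence $L_0$ is the function field of a geometrically integral $k$-variety $Y_0$ with $\dim Y_0 = d$, and by spreading out the descent one obtains a $G$-torsor $\nu\colon W \to Y$ over some dense open $Y \subseteq Y_0$.

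The inclusion $L_0 \hookrightarrow k(U/G)$ defines a dominant rational map $U/G \dashrightarrow Y$, defined on a dense open $V_0 \subseteq U/G$. After possibly shrinking $V_0$ and $Y$, one obtains a Cartesian square
\[
\xymatrix{ \pi^{-1}(V_0) \ar@{->}[rr] \ar@{->}[d] & & W \ar@{->}[d]^{\nu} \\ V_0 \ar@{->}[rr] & & Y. }
\]
Since $k$ is infinite and $\pi^{-1}(V_0)$ is a nonempty open subscheme of the affine space $V$, there is a $k$-point $u \in \pi^{-1}(V_0)(k)$, whose image in $W$ shows $W(k) \neq \emptyset$. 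For weak $(1)$-versality of $\nu$: given any $G$-torsor $\mu\colon M \to \Spec(K)$ with $K/k$ an infinite field extension, $(1)$-versality of $\pi$ applied to the dense open $V_0 \subseteq U/G$ provides a morphism $\Spec(K) \to V_0$ along which $\mu$ is a pullback of $\pi|_{V_0}$; composing with $V_0 \to Y$ and using the Cartesian square realizes $\mu$ as a pullback of $\nu$. Theorem~\ref{thm.main2}(a) then gives that $\nu$ is weakly $(3)$-versal, and applying this to $\tau$ produces the Cartesian diagram asserted in the corollary.

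The main obstacle is arranging, simultaneously, all four properties of $\nu$: geometric integrality of $Y$ (which requires regularity of $L_0/k$ and hence the fact that $k(U/G)/k$ is regular), the correct dimension (which requires that the generic torsor attains $\ed(G)$, not merely that $\ed(G) \geq \trdeg(L_0/k)$), the existence of a $k$-rational point on $W$ (which uses that $k$ is infinite), and weak $(1)$-versality (which comes from $(1)$-versality of the original $\pi$ together with the Cartesian square). Once such a $\nu$ is in hand, the remainder of the argument is a direct appeal to Theorem~\ref{thm.main2}(a).
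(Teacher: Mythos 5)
Your proposal is correct and takes essentially the same route as the paper: both compress the generic fibre of a standard versal $G$-torsor down to an intermediate field of transcendence degree $d=\ed(G)$, spread out to a $d$-dimensional geometrically integral $Y$ with $W(k)\neq\emptyset$ supplied by $k$-points upstairs, and conclude via the weakly $(1)$-versal $\Rightarrow$ weakly $(3)$-versal machinery. The only cosmetic differences are your use of $U/G$ for a generically free representation in place of the paper's $G \! \setminus \! \GL_n$, and your appeal to Theorem~\ref{thm.main2}(a) as a black box where the paper reruns the argument of Lemma~\ref{lem.gln}(b).
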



In a similar spirit, we would like to propose the following variant of the Grothendieck-Serre conjecture.

\begin{conj} \label{conj.main4}
Let $k$ be an algebraically closed field, $G$ be a connected reductive linear algebraic group over $k$, $R$ be a regular local ring containing $k$, and
$\tau \colon T \to \Spec(R)$ be a $G$-torsor. Let $K$ be the field of fractions of $R$ and 
$\tau_K \colon T_K \to \Spec(K)$ be the $G$-torsor 
obtained by restricting $\tau$ to the generic point of $\Spec(R)$. Assume that $\ed_k (\tau_K) = d$. Then 
there exists a Cartesian diagram of $k$-morphisms
\[ \xymatrix{   T \ar@{->}[d]_{\tau}  \ar@{->}[rr] &  & W \ar@{->}[d]^{\nu}  \\
                 \Spec(R)  \ar@{->}[rr] & & Y, }  
\]
where $Y$ is a $d$-dimensional integral scheme of finite type over $k$
and $\nu$ is a $G$-torsor. 
\end{conj}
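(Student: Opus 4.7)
By the definition of essential dimension, $\ed_k(\tau_K) = d$ means that there is an intermediate field $k \subseteq K_0 \subseteq K$ with $\trdeg(K_0/k) = d$ and a $G$-torsor $\tau_0 \colon T_0 \to \Spec(K_0)$ such that $\tau_K$ is the pullback of $\tau_0$ along the inclusion $K_0 \hookrightarrow K$. A standard spreading-out argument produces an integral $k$-scheme $Y$ of finite type with function field $K_0$, together with a $G$-torsor $\nu \colon W \to Y$ whose generic fiber recovers $\tau_0$; after shrinking $Y$, we may assume $\dim Y = d$. This construction furnishes a canonical morphism $\Spec(K) \to Y$ (the generic point of $\Spec(R)$ sent to the generic point of $Y$) under which $\nu$ pulls back to $\tau_K$.

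The plan is to reduce the problem to extending this morphism to a $k$-morphism $f \colon \Spec(R) \to Y$. Indeed, if such an $f$ exists, then $f^*\nu$ is a $G$-torsor on $\Spec(R)$ whose restriction to $\Spec(K)$ is isomorphic to $\tau_K$. Both $\tau$ and $f^*\nu$ then define classes in $H^1(R,G)$ with the same image in $H^1(K,G)$. Applying the Grothendieck-Serre conjecture to the twisted inner form $\on{Aut}_G(\tau)$ (which is again connected reductive, and for which the conjecture is known in the stronger group-scheme-over-$R$ generality by Fedorov-Panin \cite{fedorov-panin} since $k$ is algebraically closed and hence infinite), we deduce $\tau \cong f^*\nu$ as $G$-torsors on $\Spec(R)$. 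This produces the Cartesian diagram required by the conjecture.

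The crux, therefore, is to construct the extension $f$. The natural first step is to replace $Y$ by a smooth projective model $\bar Y$ of $K_0$ over $k$ (available in characteristic zero by Hironaka; in positive characteristic one would substitute de Jong alterations or accept a more modest target). When $\dim R = 1$, $R$ is a DVR and the valuative criterion for properness immediately extends $\Spec(K) \to \bar Y$ to $\Spec(R) \to \bar Y$. When $\dim R \geq 2$, however, $R$ is not a valuation ring, and the rational map $\Spec(R) \dashrightarrow \bar Y$ may be undefined on a subscheme of codimension $\geq 2$; this is the principal obstacle. Two strategies seem most promising: (i) blow up $\bar Y$ in a sequence of centers tailored to $\tau$ so as to resolve the indeterminacy while preserving $\nu$ on the complement of a codimension-two locus; or (ii) invoke a purity-type theorem for reductive $G$-torsors over regular local rings, arguing that because $\tau$ itself spreads across codimension two in $\Spec(R)$, so must the classifying map into a suitable rigidification of the stack of such torsors. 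As a sanity check, the case $d = 0$ forces $Y = \Spec(k)$ and $\nu$ trivial (since $k$ is algebraically closed and $G$ is connected), recovering Conjecture \ref{conj.g-s} itself; this suggests that Conjecture \ref{conj.main4} is strictly harder than the classical Grothendieck-Serre conjecture precisely by the amount of geometric information that must be organized into the target $Y$.
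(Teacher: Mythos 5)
You are attempting to prove Conjecture~\ref{conj.main4}, which the paper explicitly does not prove: the authors write that they do not know how to prove or disprove it, and Section~\ref{sect.conj.main4} offers only partial evidence. Your proposal does not close this gap. The framing in your first two paragraphs is sound: spreading out $K_0$ to a $d$-dimensional integral model $Y$ carrying a $G$-torsor $\nu$, and observing that any morphism $f\colon \Spec(R)\to Y$ extending the classifying map at the generic point would finish the argument by twisting together with the group-scheme form of Grothendieck--Serre proved in \cite{fedorov-panin}. But the entire content of the conjecture is concentrated in the step you defer. When $\dim R\geqslant 2$ the rational map $\Spec(R)\dasharrow \bar Y$ can genuinely be undefined in codimension $\geqslant 2$, the valuative criterion gives nothing, and neither of your two ``promising strategies'' is carried out or even shown to be plausible: no purity statement of the strength needed in (ii) is known, and (i) is a wish rather than a construction. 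There is also a secondary gap you pass over: $\nu$ is only defined on the open model $Y$, not on the projective compactification $\bar Y$, so even if the map extended to $\bar Y$ its image at the closed point of $\Spec(R)$ could land in $\bar Y\setminus Y$, outside the domain of $\nu$. What you have is a reduction of the conjecture to another open problem, not a proof.

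For contrast, the evidence the paper does give covers only the two extremes. When $d=0$ the conjecture reduces to Conjecture~\ref{conj.g-s} (in the form proved in \cite{cto}), as you note. When $d=\ed(G)$ it follows from Corollary~\ref{cor.main3}, whose proof sidesteps the extension problem entirely: rather than extending a map of base schemes, one uses the vanishing of $H^1(R,\GL_n)$ to produce a $G$-equivariant morphism $T\to\GL_n$ directly (Lemma~\ref{lem.gln}) and then translates its image by an element of $\GL_n(k)$ into the preimage $U_0$ of a dense open subset of $G\!\setminus\!\GL_n$ over which the versal torsor is pulled back. That translation trick works only because at $d=\ed(G)$ the relevant open subset of $G\!\setminus\!\GL_n$ may be taken dense; for intermediate values of $d$ the locus of points of $G\!\setminus\!\GL_n$ whose fibers have essential dimension $\leqslant d$ is in general a proper closed subvariety, and no analogous mechanism is available. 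That intermediate regime is exactly what your proposal would need to handle, and it does not.
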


One may also consider stronger versions of Conjecture~\ref{conj.main4},
where $R$ is allowed to be semi-local, $G$ is not required to be reductive, and/or 
the assumption on the base field is weakened (e.g., $k$ is only assumed to be infinite or perfect, or perhaps, allowed to be an arbitrary field). 
If $k$ is not assumed to be algebraically closed, then it makes sense to also ask that $W(k)$ should be non-empty and $Y$ geometrically integral, as in Corollary~\ref{cor.main3}, 
so that Conjecture~\ref{conj.main4} reduces to the Grothendieck-Serre Conjecture~\ref{conj.g-s} when $d=0$. 
We do not know how to prove or disprove any of these versions. Some (admittedly modest) evidence for Conjecture~\ref{conj.main4} is presented in Section~\ref{sect.conj.main4}.

\section{Preliminaries on (1)-special groups}\label{sect.1spec}

Throughout this paper $G$ will denote an algebraic group defined over a base field $k$. Unless otherwise specified, we will
not assume that $G$ is linear. We will use the terms ``linear" and ``affine" interchangeably in reference to algebraic groups.

\begin{lemma} \label{lem.long}
Let $X$ be a scheme over $k$.
Let $G_1 \hookrightarrow G$ be a closed immersion of algebraic groups over $k$. 
Then the natural sequence of pointed sets
$$
1 \too G_1(X)\too G(X)\too (G_1 \! \! \setminus \! G)(X)\stackrel{}{\too} H^1(X,G_1)\stackrel{}{\too} H^1(X,G) 
$$
is exact for any $k$-scheme $X$. 
\end{lemma}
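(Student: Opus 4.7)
The plan is to treat this as a standard instance of the five-term exact sequence in non-abelian fppf cohomology associated to a short exact sequence of sheaves of groups. The core input is that the closed immersion $G_1 \hookrightarrow G$ makes the fppf quotient sheaf $G_1\setminus G$ into the target of a $G_1$-torsor $\pi \colon G \to G_1\setminus G$ (where $G_1$ acts on $G$ by left translation). This follows because, in the fppf topology, $\pi$ is a sheaf epimorphism by construction, and $G_1 \times_k G \to G \times_{G_1\setminus G} G$, $(g_1,g)\mapsto (g,g_1 g)$, is an isomorphism by the very definition of the quotient.

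First I would introduce the connecting map $\delta \colon (G_1\setminus G)(X) \to H^1(X,G_1)$ as a pull-back of torsors. Given a section $s \in (G_1\setminus G)(X)$, form the Cartesian square
\[ \xymatrix{ T_s \ar@{->}[r] \ar@{->}[d] & G \ar@{->}[d]^{\pi} \\
\Spec-free\ X \ar@{->}[r]^-{s} & G_1\setminus G, } \]
and set $\delta(s) = [T_s] \in H^1(X,G_1)$; since fppf torsors pull back to fppf torsors, $T_s \to X$ is indeed a $G_1$-torsor. Functoriality and pointedness are immediate.

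Next I would verify exactness at each of the four relevant spots. Exactness at $G_1(X)$ is immediate from $G_1 \hookrightarrow G$ being a monomorphism. Exactness at $G(X)$ records that $g \in G(X)$ lies in the kernel of $G(X) \to (G_1\setminus G)(X)$ iff $g$ factors through $G_1 \hookrightarrow G$, which is precisely the definition of $G_1$ as the stabilizer of the distinguished point of $G_1\setminus G$. Exactness at $(G_1\setminus G)(X)$ is the statement that $\delta(s)$ is trivial iff $T_s \to X$ admits a section, iff $s$ lifts to a section of $\pi$ over $X$, iff $s$ comes from $G(X)$. Exactness at $H^1(X,G_1)$ is the only nontrivial point: given a $G_1$-torsor $P \to X$, one forms the contracted product $P \times^{G_1} G$, which is the image of $[P]$ in $H^1(X,G)$. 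If this image is trivial, choose a trivialization; it provides a $G_1$-equivariant morphism $P \to G$, whose composite with $\pi$ descends to a section $s \in (G_1\setminus G)(X)$ with $T_s \cong P$. Conversely, for $P = T_s$ the identity of $G$ supplies a canonical trivialization of $T_s \times^{G_1} G \cong s^\ast(G\times^{G_1}G) \cong s^\ast(G_1\setminus G \times_k G)$.

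The only step that requires genuine care is exactness at $H^1(X,G_1)$, since the usual abelian arguments do not apply and one must manipulate contracted products rather than add/subtract cocycles. The key observation making this step work is that a trivialization of $P \times^{G_1} G$ as a $G$-torsor is the same datum as a $G_1$-equivariant map $P \to G$; this is the standard \emph{torsion bijection} $\on{Hom}^{G}(P\times^{G_1}G, G) \cong \on{Hom}^{G_1}(P,G)$ in non-abelian cohomology, and once this is in hand the construction of $s$ and the verification $T_s \cong P$ are mechanical.
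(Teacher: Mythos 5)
Your argument is correct and is exactly the standard five-term exact sequence construction that the paper invokes by citation (Demazure--Gabriel III, \S4, 4.6 and Milne III, 4.6): the connecting map is pull-back of the $G_1$-torsor $G \to G_1\!\setminus\! G$ along a section, and exactness at $H^1(X,G_1)$ is handled via contracted products and the bijection between trivializations of $P \times^{G_1} G$ and $G_1$-equivariant maps $P \to G$, just as in those references. The only blemish is typographical: the base of your first pull-back square contains a stray token and should simply read $X$.
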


Here $G_1 \! \! \setminus \! G$ denotes the homogeneous space parametrizing the right cosets of $G_1$ in $G$.

\begin{proof} 
See \cite[Proposition III \S 4, 4.6]{DG} and \cite[Proposition III 4.6]{milne}.
\end{proof}

\begin{rmk}
As a consequence of Lemma~\ref{lem.long}, we see that any element of the kernel of the map $H^1(X,G_1)\stackrel{}{\too} H^1(X,G) $ belongs to $\Tors(X,G)$. Indeed the map \[ (G_1 \! \! \setminus \! G)(X)\stackrel{}{\too} H^1(X,G_1) \] send a morphism $f:X \to G_1\! \! \setminus \! G$  to the pull-back of the $G_1$-torsor $G \to G_1 \! \setminus \! G$ via $f$.
\end{rmk}

We now proceed with the main result of this section.

\begin{prop} \label{prop.non-linear}
Let $G$ be a (1)-special algebraic group over a field $k$. Then 

\smallskip (a)
$G$ is smooth,

\smallskip
(b) $G$ is linear, 

\smallskip
(c) $G$ is connected.
\end{prop}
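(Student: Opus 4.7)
The plan is to prove the three assertions in the order stated, each time producing a non-trivial $G$-torsor over some field extension $K \supset k$ whenever $G$ fails the corresponding property. I can freely base-change to $\bar k$ throughout, since a non-trivial $G_{\bar k}$-torsor over $L/\bar k$ is also a non-trivial $G$-torsor over $L$ viewed as a $k$-extension.

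For (a), there is nothing to show in characteristic zero, so assume $\cha(k) = p > 0$. If $G_{\bar k}$ is not smooth then, by analysis of the Lie algebra, $G$ contains a non-trivial infinitesimal closed subgroup $H$ isomorphic to $\alpha_p$ or $\mu_p$. Taking $K = \bar k(t)$, Artin-Schreier (respectively, Kummer) theory gives $H^1(K, H) \cong K/K^p$ (respectively, $K^\times/K^{\times p}$), both of which are non-trivial. The exact sequence of Lemma~\ref{lem.long} reads $(G/H)(K) \to H^1(K, H) \to H^1(K, G)$, so it suffices to find a class in $H^1(K, H)$ outside the image of $(G/H)(K)$; this is achievable for $K = \bar k(t)$, since the image consists of $H$-torsors arising as fibers of $G \to G/H$ over $K$-points of the lower-dimensional scheme $G/H$, while $H^1(K, H)$ is substantially larger.

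Once $G$ is smooth, (b) would follow from Chevalley's structure theorem applied to $G^0$: one obtains an exact sequence $1 \to H \to G^0 \to A \to 1$ with $H$ smooth connected affine and $A$ an abelian variety, and non-linearity of $G$ forces $A \neq 0$. Non-trivial abelian varieties admit non-trivial torsors over suitable fields (e.g., via non-trivial Weil-Ch\^atelet elements over a function field), which lift to non-trivial $G$-torsors through further applications of Lemma~\ref{lem.long}. With $G$ now smooth and affine, (c) follows from a classical versality argument: embedding $G \hookrightarrow \GL_n$, the $G$-torsor $\GL_n \to \GL_n/G$ is trivial at the generic point $K := k(\GL_n/G)$ by (1)-speciality, hence admits a rational section. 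This yields $V \cong G \times_k U$ for some dense opens $V \subset \GL_n$ and $U \subset \GL_n/G$. Since $V$ is connected (as a dense open in the irreducible $\GL_n$), so is $G \times_k U$, whence $G$ is connected.

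I expect (a) to be the main obstacle: exhibiting a non-trivial $G$-torsor for an arbitrary non-smooth $G$ requires careful cohomological input (locating $\alpha_p$ or $\mu_p$ in $G$, and quantifying the failure of surjectivity of $(G/H)(K) \to H^1(K, H)$), whereas parts (b) and (c) rest on the well-known Chevalley structure theorem and the standard versality argument, respectively.
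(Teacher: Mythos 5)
Your part (c) is correct and coincides with the paper's argument, but your plan for (a) rests on a strategy that cannot work. Locating a copy of $\alpha_p$ or $\mu_p$ inside a non-smooth $G$ and hoping for a contradiction is doomed, because (1)-special groups themselves contain such subgroups: in characteristic $p$, the special group $\SL_2$ contains $\alpha_p$ (inside the unipotent $\bbG_a$ of upper unitriangular matrices) and $\SL_p$ contains $\mu_p$ (its centre). Correspondingly, your claim that the image of $(H\backslash G)(K)\to H^1(K,H)$ must be much smaller than $H^1(K,H)$ because $H\backslash G$ is finite-dimensional is simply false: already for $H=\mu_p\subset G=\bbG_m$ (a special group!) the quotient $H\backslash G\cong\bbG_m$ and the map $K^\times\to H^1(K,\mu_p)=K^\times/(K^\times)^p$ is surjective, even though $\bbG_m$ is one-dimensional and $K^\times/(K^\times)^p$ is an infinite-dimensional $\mathbb F_p$-vector space for $K=\overline{k}(t)$. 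Non-smoothness has to be exploited as a property of $G$ itself, not of a subgroup. The paper does this by an entirely different route: the Tossici--Vistoli inequality $\ed(G)\geqslant\dim(\mathcal G)-\dim(G)$, where $\mathcal G$ is the Lie algebra; since a (1)-special group has $\ed(G)=0$, this forces $\dim(\mathcal G)=\dim(G)$, i.e.\ smoothness.

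Part (b) has the right skeleton (Chevalley's theorem plus non-triviality of torsors under abelian varieties over function fields), but the decisive step is unjustified. A non-trivial class in $H^1(K,A)$ does not automatically ``lift'' to a non-trivial class in $H^1(K,G^0)$: the map $H^1(K,G^0)\to H^1(K,A)$ induced by the quotient need not be surjective, and Lemma~\ref{lem.long} concerns subgroups, not quotients, so it cannot be used in the direction you indicate. The paper's fix is to choose a finite cyclic subgroup $C\subset G$ of prime order $l\neq\cha(k)$ whose image in $A$ is still a copy of $C$; then rigidity of abelian varieties gives $(C\backslash A)(k(t))=(C\backslash A)(k)$, hence $H^1(k(t),C)\to H^1(k(t),A)$ has trivial kernel, while this map factors through $H^1(k(t),G)=1$, contradicting Kummer theory. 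You also omit the passage from $G^0$ to $G$: one must know that $G^0$ is again (1)-special (the paper uses $\ed(G^0)\leqslant\ed(G)$) and that linearity of $G^0$ implies linearity of $G$ (over $\overline{k}$ every component is isomorphic to $G^0$ as a variety, so $G$ is a finite disjoint union of affines).
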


Our proof of parts (b) and (c) below is adapted from~\cite[Section 4.1]{serre-special}, where (2)-special 
groups are shown to be linear and connected.

\begin{proof}
(a)  By~\cite[Theorem 1.2]{tossici-vistoli}, $\ed(G) \geqslant \dim(\mathcal{G}) - \dim(G)$, where $\mathcal{G}$ is the Lie algebra of $G$ \footnote{This 
 inequality is stated in~\cite[Theorem 1.2]{tossici-vistoli} only in the case where $G$ is linear. However, the proof given there goes through for any algebraic group $G$.}.
If $G$ is (1)-special then, clearly, $\ed(G) = 0$ and this inequality tells us that $\dim(\mathcal{G}) = \dim(G)$. This shows that $G$ is smooth.

\smallskip
(b)  By faithfully flat descent, \cite[Proposition 2.6.1]{EGAIV2}, we may (and will) assume that $k$ is algebraically closed. 
By part (a), $G$ is smooth. As we explained in the Introduction, when $G$ is smooth, 
$\Tors(K, G)$ coincides with $H^1(K, G)$ for any field $K$ containing $k$. Consequently, 
$H^1(K,G)=0$ for any such $K$.  We now proceed in two steps.

\smallskip
{\bf Step 1.} Assume that $G$ is connected. By Chevalley's structure theorem~\cite{chevalley, Con} there exists 
a unique connected normal linear $k$-subgroup of $G$ such that the quotient is an abelian variety $A$. 

We claim that $A$ is trivial.  
Assume the contrary. Then by \cite[Lemma 3]{serre-special}, there exists a cyclic subgroup 
$C$ of $G$ of prime order $l$, distinct from the characteristic of $k$, 
such that the composition $C \too G \too A$
is injective. Let $K = k(t)$, where $t$ is an indeterminate.
 By Lemma \ref{lem.long}, the inclusion $C \hookrightarrow A$ induces an exact sequence of pointed sets
 $$ A(K)\too (C \! \setminus \! A)(K)\too H^1(K,C)\too H^1(K, A). $$
 Note that $C \! \setminus \! A$ is an abelian variety. Hence,
 every rational map $\mathbb{A}^1 \dasharrow C \! \setminus \! A$ is constant; see, e.g.,~\cite[Proposition 3.9]{Mi3}.
 Consequently, $(C \! \setminus \! A)(K)=(C \! \setminus \! A)(k)$ and thus the morphism $A(K) \to (C \! \setminus \! A)(K)$ is surjective.
 We conclude that 
 \begin{equation} \label{e.kernel}
 \text{the morphism $ H^1(K,C)\too H^1(K,A)$ has trivial kernel.}
 \end{equation}
 Now recall that the inclusion $C \hookrightarrow A$ factors through $G$. Thus the morphism $ H^1(K,C)\to H^1(K,A)$ factors through $H^1(K, G)$.
 Since we are assuming that $G$ is (1)-special and thus $H^1(K,G)= 1$, we conclude that $H^1(K, C) \to H^1(K, A)$ is the trivial map. That is,
 the kernel of this map is all of $H^1(K, C)$. Now~\eqref{e.kernel} tells us that 
 $H^1(K, C) = 1$.  On the other hand, since $l$ is 
 different from the characteristic of $k$, $C$ is isomorphic to $\mu_l$ and by Kummer theory,
 $H^1(K,C)\simeq K^*/(K^*)^l \neq 1$, a contradiction.
 
 \smallskip
 {\bf Step 2.} Now let $G$ be an arbitrary (1)-special group over $k$. By the definition of essential dimension,
 $\ed(G)=0$. Denote the identity component of $G$ by $G^0$. 
 Since $G^0$ is a closed subgroup of $G$ of finite index, $\ed(G^0) \leqslant \ed(G)$ (see~\cite[Principle 2.10]{Bro}) and thus
 $\ed(G^0) = 0$. Since $k$ is algebraically closed we conclude that $G^0$ is (1)-special and hence,
 affine by Step 1. Since $k$ is algebraically closed, every connected component of $G$ has a $k$-rational point. 
 Consequently, every connected component is isomorphic to $G^0$ (as a variety). Thus
 $G$ is the disjoint union of finitely many affine varieties (each isomorphic to $G^0$). We conclude that $G$ is affine, and hence linear.

\smallskip
(c) By part (a), $G$ is a closed subgroup of $\GL_n$ for some $n \geqslant 1$.
The natural projection $\pi \colon \GL_n \to X = G \! \setminus \! \GL_n$ is then a $G$-torsor. Clearly $X$ is integral.
Since we are assuming that $G$ is (1)-versal, $\pi$ splits over the generic point of $X$. 
Consequently, $\GL_n$ is birationally isomorphic to $G \times X$. 
Since $\GL_n$ is connected, we conclude that $G$ is also connected.
\end{proof}

\begin{cor} \label{cor.special-ed}
Let $G$ be an algebraic group over a field $k$ (not necessarily affine).
Then $G$ is (1)-special if and only if $\ed(G) = 0$.
\end{cor}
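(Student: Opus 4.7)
The implication $G$ (1)-special $\Rightarrow \ed(G)=0$ is immediate: every $G$-torsor over every field $K\supseteq k$ is trivial and hence descends to $\Spec(k)$, which has transcendence degree $0$ over $k$, so $\ed(\tau)=0$ for every torsor $\tau$ and thus $\ed(G)=0$.

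For the converse, suppose $\ed(G)=0$. The plan is to argue in three stages. First, the argument in the proof of Proposition~\ref{prop.non-linear}(a) uses only the Tossici--Vistoli inequality $\ed(G)\geqslant \dim(\mathcal{G})-\dim(G)$, so it already yields the smoothness of $G$ from the hypothesis $\ed(G)=0$ alone. Second, I would adapt the proofs of parts (b) and (c) of Proposition~\ref{prop.non-linear} after base change to $\bar{k}$, now under the weaker assumption $\ed(G_{\bar{k}})\leqslant \ed_k(G)=0$ in place of (1)-speciality of $G_{\bar{k}}$. Both arguments invoke the vanishing of $H^1(K,G_{\bar{k}})$ only for a $K$ equal to $\bar{k}(t)$ or to the function field of the geometrically integral $\bar{k}$-variety $\GL_n/G_{\bar k}$; in either case the algebraic closure of $\bar{k}$ in $K$ is $\bar{k}$ itself, so $\ed(G_{\bar{k}})=0$ forces every class in $H^1(K,G_{\bar{k}})$ to pull back from $H^1(\bar{k},G_{\bar{k}})=1$, the latter vanishing by Lang's theorem for the identity component together with triviality of Galois cohomology for the component group. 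This gives $G_{\bar{k}}$, and hence $G$ by descent, linear and connected.

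Third, to deduce (1)-speciality, let $K\supseteq k$ and $\tau\in\Tors(K,G)$. The hypothesis $\ed(G)=0$ supplies an intermediate algebraic extension $L_0/k$ inside $K$ and a torsor $\tau_0\in\Tors(L_0,G)$ pulling back to $\tau$, so it suffices to show $\tau_0$ is trivial. For $k$ infinite I would apply Corollary~\ref{cor.main3} with $d=0$ and $S=L_0$: the scheme $Y$ in the Cartesian diagram there is $0$-dimensional, geometrically integral, and of finite type over $k$, hence equal to $\Spec(k)$; the $G$-torsor $\nu\colon W\to Y$ with $W(k)\neq\emptyset$ must be the split torsor; and pulling back forces $\tau_0$ to be trivial. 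For $k$ finite, $L_0$ is an algebraic extension of a finite field, and Lang's theorem applied to the smooth connected linear algebraic group $G$ gives $H^1(L_0,G)=1$, so again $\tau_0$ is trivial.

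The main obstacle is this last stage --- the descent of (1)-speciality from $\bar{k}$ to $k$ --- which is handled by Corollary~\ref{cor.main3} in the infinite-base-field case and by Lang's theorem in the finite-base-field case. A secondary technical point is the adaptation in the second stage, which hinges on the geometric fact that the algebraic closure of $\bar{k}$ in the relevant purely transcendental or geometrically integral function fields is $\bar{k}$ itself; it is this observation that allows the essential-dimension hypothesis to play the role that (1)-speciality played in the original proofs of Proposition~\ref{prop.non-linear}(b) and (c).
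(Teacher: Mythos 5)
Your proof is correct, but the hard direction is handled by a genuinely different route than the paper's. For ``$\ed(G)=0 \Rightarrow G$ is (1)-special'', the paper reduces to the affine case exactly as you do (base change to $\overline{k}$, where the converse is immediate; then Proposition~\ref{prop.non-linear}(b) and descent give that $G$ is affine) and then simply cites \cite[Proposition 4.4]{merkurjev} or \cite[Proposition 4.3]{tossici-vistoli} for affine groups. You instead prove the affine case internally, from Corollary~\ref{cor.main3} with $d=0$ when $k$ is infinite and from Lang's theorem when $k$ is finite. This is logically sound: Corollary~\ref{cor.main3} rests only on Lemma~\ref{lem.gln}, Lemma~\ref{lem.examples}(a) and the identity $\ed(\pi_K)=\ed(G)$, none of which use Corollary~\ref{cor.special-ed}, so there is no circularity --- but it inverts the paper's order of presentation, since Corollary~\ref{cor.main3} is only established in Section~\ref{sect.cor.main3}; your argument would require restating it earlier. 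What your route buys is self-containedness (no appeal to the external references for the affine case); what the paper's route buys is brevity and the ability to place the corollary in Section~\ref{sect.1spec}. Two small simplifications to your write-up: the ``adaptation'' in your second stage is unnecessary, because over $\overline{k}$ the hypothesis $\ed(G_{\overline{k}})=0$ is already equivalent to (1)-speciality of $G_{\overline{k}}$ (any descent field of transcendence degree $0$ over $\overline{k}$ inside $K$ is $\overline{k}$ itself, and every torsor over $\Spec(\overline{k})$ is split by the Nullstellensatz --- Lang's theorem is not needed at this point), so Proposition~\ref{prop.non-linear} applies verbatim to $G_{\overline{k}}$; and in your third stage one should observe that $L_0$ may be taken finite over $k$ (a torsor descends to a finitely generated, hence finite, subextension) before invoking Lang's theorem.
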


\begin{proof} (a) The implication 
\[ \text{$G$ is (1)-special $\Longrightarrow$ $\ed(G) = 0$} \]
follows immediately from the definition of essential dimension.
If $k$ is algebraically closed, the converse is also obvious (we have already used this observation in the proof of Step 2 above).

Now assume that $k$ is an arbitrary field and $\ed(G) = 0$.
Then clearly $\ed(G_{\overline{k}}) = 0$, where $G_{\overline{k}} = G \times_{\Spec(k)} \Spec(\overline{k})$
and $\overline{k}$ denotes the algebraic closure of $k$. As we pointed out above, this implies that $G_{\overline{k}}$
is (1)-special. By Proposition~\ref{prop.non-linear}(b), $G_{\overline{k}}$ is affine. By faithfully flat descent, 
$G$ is also affine. For an affine group $G$, a proof of the implication
\[ \text{$\ed(G) = 0$ $\Longrightarrow$ $G$ is (1)-special} \]
can be found in~\cite[Proposition 4.4]{merkurjev} or~\cite[Proposition 4.3]{tossici-vistoli}.
\end{proof}

\section{Preliminaries on (3)-special groups}
%
%

The following lemma will be repeatedly used in the sequel.

\begin{lemma} \label{lem.torus1} (a) Suppose $1 \to G_1 \to G \to G_2 \to 1$ is an exact sequence of algebraic groups
defined over $k$. If $G_1$ and $G_2$ are (3)-special, then so is $G$.

\smallskip
(b) If $G = G_1 \times_k G_2$ is a direct product of $G_1$ and $G_2$, then the converse holds as well:
$G$ is (3)-special if and only if both $G_1$ and $G_2$ are (3)-special.

\smallskip
(c) Let $l/k$ be a field extension of finite degree and $G$ be an algebraic group defined over $l$. If $G$ is (3)-special over $l$,
then the Weil restriction $R_{l/k}(G)$ is (3)-special over $k$.
\end{lemma}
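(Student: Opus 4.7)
For part (a), the plan is to exploit the long exact sequence of pointed fppf cohomology sets associated to $1 \to G_1 \to G \to G_2 \to 1$. Since $G_1$ and $G_2$ are (3)-special, they are in particular (1)-special, and so are affine by Proposition~\ref{prop.non-linear}. An extension of affine algebraic groups is affine, so $G$ itself is affine, and $\Tors(S, G) = H^1(S, G)$ for every semi-local $k$-algebra $S$. Given a $G$-torsor $T \to \Spec(S)$, its image in $H^1(S, G_2)$ represents the contracted torsor $T/G_1$, which is trivial by (3)-specialness of $G_2$; exactness of
\[ H^1(S, G_1) \to H^1(S, G) \to H^1(S, G_2) \]
then places $[T]$ in the image of $H^1(S, G_1)$, which is the base point since $G_1$ is (3)-special. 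Hence $[T]$ is trivial.

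For part (b), the forward implication is a special case of (a). For the converse I would use that each projection $G_1 \times_k G_2 \to G_i$ is split by the corresponding inclusion, so a $G_1 \times_k G_2$-torsor over $\Spec(S)$ canonically decomposes as a pair of torsors (equivalently, $\Tors(S, G_1 \times_k G_2) = \Tors(S, G_1) \times \Tors(S, G_2)$ as pointed sets). If the left-hand side is trivial, so is each factor on the right.

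For part (c), the key input is the Shapiro-type bijection for Weil restriction along a finite (hence finite flat) extension: for any $k$-algebra $S$,
\[ H^1_{\text{fppf}}(S, R_{l/k}(G)) \;\cong\; H^1_{\text{fppf}}(S \otimes_k l,\, G), \]
which follows from the universal property of $R_{l/k}$ applied to torsors. It then suffices to observe that when $S$ is a semi-local $k$-algebra, $S \otimes_k l$ is a finite, hence integral, extension of $S$ and so remains semi-local; it visibly contains $l$. By (3)-specialness of $G$ over $l$ the right-hand side is trivial, so $R_{l/k}(G)$ is (3)-special over $k$.

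The main obstacle I foresee is chiefly bookkeeping: ensuring affineness in (a) so that $\Tors$ and $H^1$ agree along each term of the cohomology sequence, and justifying the Shapiro bijection in (c) at the level of fppf cohomology we need here. Both the non-abelian long exact sequence and the Weil restriction adjunction at the level of torsors are classical, so no new ideas should be required.
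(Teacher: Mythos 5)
Your proposal is correct and follows essentially the same route as the paper: the non-abelian cohomology exact sequence for (a), the product decomposition of torsors for (b), and the Shapiro bijection for Weil restriction together with the observation that $S \otimes_k l$ is again semi-local for (c). The only cosmetic difference is that you explicitly verify affineness of $G$ in (a), which the paper sidesteps by using the inclusion $\Tors(S,G) \subseteq H^1(S,G)$.
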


\begin{proof}  Throughout the proof, $S$ will denote a semi-local ring containing $k$.

\smallskip
(a) 
%
%
The exact sequence 
$1 \to G_1 \to G \to G_2 \to 1$ of algebraic groups over $k$ gives rise to an exact sequence
\begin{equation} \label{e.long(b)}
1 \to G_1(S) \to G(S) \to G_2(S) \to H^1(S, G_1) \to H^1(S, G) \to H^1(S, G_2);
\end{equation}
see~\cite[Section III.4]{milne}.
Since $G_1$ and $G_2$ are (3)-special they are (1)-special. By Proposition~\ref{prop.non-linear}(b), $G_1$ and $G_2$ are linear. Hence we have $H^1(S, G_1) = \Tors (S,G_1)=1$ and  $H^1(S, G_2)= \Tors(S,G_2) = 1$. We conclude that $H^1(S, G) = 1$, as desired.

\smallskip
(b) Suppose $G$ is (3)-special. Then $G$ is (1)-special and hence, affine by Proposition~\ref{prop.non-linear}(b). Since $G_1$ and $G_2$ are isomorphic to closed subgroups of $G$, they are also affine. Now $H^1(S, G) = H^1(  S, G_1) \times H^1(  S, G_2)$ by \cite[III \S 4, 4.2]{DG}. Since $H^1(S, G)$ is trivial, so are $H^1(S, G_1)$ and $H^1(S, G_2)$.

\smallskip
(c) Since $G$ is (3)-special, it is also (1)-special, and hence, linear by Proposition~\ref{prop.non-linear}(b). 
By the Faddeev-Shapiro theorem, $H^1(S, R_{l/k}(G_l)) = H^1(S \otimes_k l, G)$. Note that $S \otimes_k l$ is a semi-local ring
containing $l$. Since $G$ is (3)-special over $l$, $H^1(S \otimes_k l, G) = 1$, and part (c) follows.
\end{proof}

Recall that a smooth connected algebraic $k$-group $U$ is called unipotent if over the algebraic closure $\overline{k}$ there exists a tower of algebraic groups 
\begin{equation} \label{e.tower} 
 \xymatrix{  1 = U_0  \ar@{^{(}->}[r] & U_1 \ar@{^{(}->}[r]  &  \ldots \ar@{^{(}->}[r] & U_{r-1} \ar@{^{(}->}[r] & U_r = U   } 
\end{equation}
such that each $U_i$ is normal in $U_{i+1}$ and the quotient $U_{i+1}/U_i$ is isomorphic to $\bbG_a$ (over $\overline{k})$. A unipotent group $U$ over $k$ 
is called split, if there is a tower~\eqref{e.tower} such that the subgroups $U_i$ and the isomorphisms $U_{i+1}/U_i \simeq \bbG_a$ are all defined over $k$.

\begin{lemma} \label{lem.examples} The following algebraic groups are (3)-special for every positive integer $n$:

\smallskip
(a) the general linear group $\GL_n$, 

\smallskip
(b) the special linear group $\SL_n$, 

\smallskip
(c) the symplectic group $\Sp_{2n}$, 

\smallskip
(d) any $k$-split smooth connected unipotent group.
\end{lemma}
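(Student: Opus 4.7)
The plan is to prove the four parts in the order (a), (b), (d), (c), building from the simplest to the most involved. Throughout, let $S$ denote an arbitrary semi-local ring containing $k$.

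For (a), I would invoke the classical fact that every finitely generated projective module over a semi-local ring is free. Since $\GL_n$-torsors over $S$ parametrize rank $n$ projective $S$-modules, they are all trivial, so $\Tors(S, \GL_n) = 1$. For (b), the long exact sequence of fppf cohomology associated to
\[
1 \to \SL_n \to \GL_n \xrightarrow{\det} \bbG_m \to 1
\]
(compare~\eqref{e.long(b)}) yields the exact sequence $\GL_n(S) \xrightarrow{\det} \bbG_m(S) \to H^1(S, \SL_n) \to H^1(S, \GL_n)$. The determinant is surjective on $S$-points via the matrices $\on{diag}(u, 1, \dots, 1)$ for $u \in S^\times$, and $H^1(S, \GL_n) = 1$ by (a), so $H^1(S, \SL_n) = 1$.

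For (d), the key input is $H^1(\Spec S, \bbG_a) = 0$ on any affine scheme: $\bbG_a$ represents the structure sheaf, which is quasi-coherent and hence acyclic on an affine scheme, and smoothness of $\bbG_a$ identifies Zariski, \'etale, and fppf cohomology in degree one. Thus $\bbG_a$ is (3)-special. The splitness assumption provides the tower~\eqref{e.tower} defined over $k$, hence a chain of short exact sequences $1 \to U_i \to U_{i+1} \to \bbG_a \to 1$ of algebraic $k$-groups. Starting from the trivial $U_0 = 1$ and iterating Lemma~\ref{lem.torus1}(a), I conclude that $U = U_r$ is (3)-special.

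Part (c) is where I expect the real work. A $\Sp_{2n}$-torsor over $S$ corresponds to a rank $2n$ projective $S$-module equipped with a non-degenerate alternating bilinear form; by (a) the underlying module is free, so the task reduces to showing every non-degenerate alternating form $\omega$ on $S^{2n}$ is equivalent under $\GL_{2n}(S)$ to the standard symplectic form. I would argue by induction on $n$: produce a hyperbolic pair $(v, w)$ with $\omega(v, w) = 1$ whose span $P = Sv \oplus Sw$ is a free rank-$2$ direct summand of $S^{2n}$, then decompose $S^{2n} = P \oplus P^\perp$ with $\omega|_{P^\perp}$ non-degenerate, and apply induction to $P^\perp$. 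The main obstacle is producing this hyperbolic pair: over a field it is immediate from non-degeneracy, but over a semi-local ring one must arrange both that $\omega(v, w)$ is an honest unit of $S$ and that $v$ is part of an $S$-basis of $S^{2n}$. I would use the Chinese remainder theorem to choose $v$ unimodular modulo every maximal ideal of $S$, and then exploit non-degeneracy over each residue field to find $w$ with $\omega(v, w) \in S^\times$; after rescaling $w$, I may take $\omega(v, w) = 1$.
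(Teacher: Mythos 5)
Your proposal is correct and follows the same route as the paper for all four parts: (a) projective modules over a semi-local ring are free, (b) the determinant sequence with $H^1(S,\GL_n)=1$ and surjectivity of $\det$ on $S$-points, and (d) acyclicity of the quasi-coherent sheaf $\bbG_a$ on the affine scheme $\Spec(S)$ combined with d\'evissage along the tower~\eqref{e.tower} via Lemma~\ref{lem.torus1}(a). The only divergence is in (c): after reducing, exactly as the paper does, to the uniqueness of the non-degenerate alternating form on the free module $S^{2n}$, the paper simply cites Kirkwood--McDonald, whereas you sketch the underlying hyperbolic-pair induction yourself. Your sketch is sound; note only that the CRT step for choosing $v$ is unnecessary ($v=e_1$ is already unimodular, and non-degeneracy means $\omega(v,\cdot)$ is a unimodular, hence surjective, functional on $S^{2n}$, so $w$ with $\omega(v,w)=1$ exists directly), after which $S^{2n}=P\oplus P^{\perp}$ with $P^{\perp}$ free by semi-locality and the induction closes. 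So your (c) buys self-containedness at the cost of a page of standard linear algebra, while the paper's citation keeps the proof short.
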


Our proof of Lemma~\ref{lem.examples} is similar to the arguments in~\cite[Section 4.4]{serre-special}, where the same groups are shown to be (2)-special.

\begin{proof}  Let $S$ be a semi-local ring containing $k$.

\smallskip
(a) Elements of $H^1(S, \GL_n)$ are in a natural bijective correspondence with isomorphism classes of projective modules 
of rank $n$ over $S$; see, e.g.,~\cite[III.(2.8)]{knus}. Here a projective $S$-module $M$ is said to be of rank $n$ if
$M \otimes_S  (S/I)$ is an $n$-dimensional vector space over $S/I$ for every maximal ideal $I$ of $S$. 
Part (a) is thus a restatement of~\cite[Lemma 1.4.4]{bruns-herzog}: every projective module of rank $n$ over a semi-local ring is free.

\smallskip
(b) By~\eqref{e.long(b)}, the exact sequence of algebraic groups 
\[ \xymatrix{  1 \ar@{->}[r] & \SL_n \ar@{->}[r]  & \GL_n \ar@{->}[r]^{\det} & \bbG_m \ar@{->}[r] & 1   } \]
induces an exact sequence
\[ \xymatrix{  \GL_n(S) \ar@{->}[r]^{\det} & \bbG_m(S) \ar@{->}[r]  & H^1(S, \SL_n) \ar@{->}[r] & H^1(S, \GL_n)  } \] 
in cohomology. By part (a), $H^1(S, \GL_n) = 1$. Thus
in order to show that $H^1(S, \SL_n) = 1$ it suffices to show that the map $\det \colon \GL_n(S) \to \bbG_m(S)$ is surjective. On the other hand, 
the surjectivity of this map follows from the fact that
\[ \det \begin{pmatrix} a & 0 & \ldots & 0 \\
                         0 & 1 & \ldots & 0 \\
                         \hdotsfor{4} \\
                         0 & 0 & \ldots & 1 \end{pmatrix} = a \]
for any $a \in \bbG_m(S)$.

\smallskip
(c) $H^1(S, \Sp_{2n})$ is in a natural bijective correspondence with isomorphism classes of projective $S$-modules $M$ 
of rank $2n$, equipped with a symplectic form; see, e.g.,~\cite[III. (2.5.1)]{knus}. As we saw in part (a), every projective module over a semi-local ring is free, 
$M \simeq S^{2n}$. Moreover, up to isomorphism, there is only one symplectic form on $S^{2n}$, $x_1 \wedge x_2 + \dots + x_{2n-1} \wedge x_{2n}$;
see, e.g., \cite[Proposition 2.1]{kirkwood}.
Thus $H^1(S, \Sp_{2n}) = 1$, as claimed.

\smallskip
(d) Applying Lemma~\ref{lem.torus1}(a) to the tower~\eqref{e.tower} recursively, we reduce to the case, where $U = \bbG_a$.
In this case part (d) follows by \cite[Proposition III 3.7]{milne} which states that fppf cohomology is the same as Zariski cohomology for coherent sheaves.
Note that $\Spec(S)$ is an affine scheme, and Zariski cohomology of a quasi-coherent sheaf over an affine scheme is trivial.  
\end{proof}

\begin{rmk} \label{rmk.tan} Combining Lemma~\ref{lem.examples}(d) with a theorem of N.~D.~T\^{a}n~\cite{tan}, we see that for a smooth connected unipotent group $U$ defined over $k$ the following conditions are equivalent: (a) $U$ is (1)-special, (b) $U$ is (2)-special, (c) $U$ is (3)-special, and (d) $U$ is split. We shall not need this in the sequel.
\end{rmk}

We are now in a position to prove Theorem~\ref{thm.main1} in the case, where $G$ is a torus.

\begin{lemma}  \label{lem.torus2} Let $T$ be a torus over $k$. If $T$ is (1)-special, then $T$ is (3)-special.
\end{lemma}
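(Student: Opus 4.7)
The plan is to reduce Lemma~\ref{lem.torus2} to the case of a quasi-trivial torus, where (3)-specialness can be assembled from Lemma~\ref{lem.examples}(a) and Lemma~\ref{lem.torus1}.

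The key input is the classical structure theorem for tori with universally trivial $H^1$, due to Colliot-Th\'el\`ene and Sansuc (building on Endo--Miyata): a torus $T$ over $k$ is (1)-special if and only if $T$ is a direct factor of a quasi-trivial torus, i.e., there exist finite separable field extensions $L_1, \dots, L_r$ of $k$ and a $k$-torus $T'$ such that
\[ T \times_k T' \;\simeq\; \prod_{i=1}^{r} R_{L_i/k}(\bbG_m). \]

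Granting the classification, the proof is a short assembly. By Lemma~\ref{lem.examples}(a), $\bbG_m = \GL_1$ is (3)-special over each $L_i$. By Lemma~\ref{lem.torus1}(c), each Weil restriction $R_{L_i/k}(\bbG_m)$ is then (3)-special over $k$. Iterating Lemma~\ref{lem.torus1}(b), the product $P = \prod_{i=1}^r R_{L_i/k}(\bbG_m)$ is (3)-special over $k$. Finally, since $T \times_k T' \simeq P$ exhibits $T$ as a direct factor of the (3)-special torus $P$, the converse direction of Lemma~\ref{lem.torus1}(b) yields that $T$ itself is (3)-special, as required.

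The main obstacle is justifying the classification in the first paragraph, which is non-trivial. One fixes a finite Galois splitting field $L/k$ of $T$, so that $T$ corresponds under the anti-equivalence between $k$-tori split by $L$ and finitely generated torsion-free $\Gal(L/k)$-modules to its character lattice $\widehat{T}$. The hypothesis $H^1(K,T) = 1$ for every $K/k$ — tested in particular on function fields of suitable generic torsors — is then translated into the lattice-theoretic statement that $\widehat{T}$ is an \emph{invertible} $\Gal(L/k)$-lattice, i.e., a direct summand of a permutation $\Gal(L/k)$-lattice; and a permutation lattice corresponds on the torus side precisely to a quasi-trivial torus. This is the content one would either import as a black box from \cite{cto} and its antecedents, or else supply via a short self-contained argument tailored to the (1)-special setting.
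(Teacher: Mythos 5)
Your proof is correct and follows essentially the same route as the paper: invoke the Colliot-Th\'el\`ene(--Sansuc) classification of tori with universally trivial $H^1$ as direct factors of quasi-trivial tori (the paper cites \cite[Theorem 18]{huruguen} for this), then combine Lemma~\ref{lem.examples}(a) with Lemma~\ref{lem.torus1}(b) and (c). The paper likewise treats the classification as a black box, so no further justification of it is needed here.
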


Recall that a torus $T$ over $k$ is called quasi-trivial if its character $\Gal(k)$-lattice is a permutation lattice.
Equivalently, $T$ is quasi-trivial if and only if $T = R_{l/k}(\bbG_m)$ for some finite field extension $l/k$.

\begin{proof}[Proof of Lemma~\ref{lem.torus2}] By a theorem of Colliot-Th\'el\`ene's, $T$ is (1)-special if and only if it is a direct factor of a quasi-trivial torus; see~\cite[Theorem 18]{huruguen}. In other words, there exists another torus $T'$ over $k$ such that $Q = T \times T'$ is a quasi-trivial torus. As we mentioned above, every quasi-trivial torus $Q$ over $k$ is of the form $Q = R_{l/k}(\bbG_m)$ for some finite field extension $l/k$. By Lemma~\ref{lem.examples}(a), $\bbG_m = \GL_1$ is (3)-special. Hence, by Lemma~\ref{lem.torus1}(c), $Q$ is (3)-special, and by Lemma~\ref{lem.torus1}(b), $T$ is (3)-special.
\end{proof}

\section{Proof of Theorem~\ref{thm.main2}}
\label{sect.inf}

Our proof will rely on the following lemma.

\begin{lemma}\label{lem.gln} 
Let $\Gamma$ be a smooth connected algebraic group over $k$, $G$ be a closed subgroup also defined over $k$, 
$S$ be a semi-local ring containing $k$, and $\tau: T\to \Spec S$ be a $G$-torsor. Assume that $\tau$ lies in the kernel 
of the natural map $H^1(S, G) \to H^1(S, \Gamma)$. Then

\smallskip
(a) there exists a $G$-equivariant morphism $f \colon T\to \Gamma$.

\smallskip
(b) Moreover, assume that $\Gamma(k)$ is dense in $\Gamma$. Then for any for any non-empty $G$-invariant 
open subvariety $U \subset \Gamma$ defined over $k$ there exists a $G$-equivariant morphism $f \colon T\to U$.
\end{lemma}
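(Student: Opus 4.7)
For (a), the plan is a direct invocation of Lemma~\ref{lem.long} applied to the closed immersion $G \hookrightarrow \Gamma$, which yields the exact sequence
\begin{equation*}
\Gamma(S) \to (G \! \setminus \! \Gamma)(S) \to H^1(S,G) \to H^1(S,\Gamma).
\end{equation*}
The hypothesis says that the class of $\tau$ lies in the kernel of the right-hand map, so it is the image of some $\phi \colon \Spec(S) \to G \! \setminus \! \Gamma$ under the middle map. By the explicit description of this map recalled in the remark following Lemma~\ref{lem.long}, $T$ is then isomorphic to the pullback $\phi^*\Gamma$ of the $G$-torsor $\Gamma \to G \! \setminus \! \Gamma$, and the projection of this pullback to $\Gamma$ supplies the desired $G$-equivariant morphism $f \colon T \to \Gamma$.

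For (b), I would start from the $f$ produced in (a) and consider, for each $\gamma \in \Gamma(k)$, the twisted morphism $f_\gamma \colon T \to \Gamma$ defined by $f_\gamma(t) = f(t) \cdot \gamma$. The left $G$-action commutes with right translation, so $f_\gamma$ is again $G$-equivariant. Since $U$ is $G$-invariant, $G \! \setminus \! U$ is a well-defined open subscheme of $G \! \setminus \! \Gamma$, and $f_\gamma$ factors through $U$ if and only if the induced map $\phi \cdot \gamma \colon \Spec(S) \to G \! \setminus \! \Gamma$ factors through $G \! \setminus \! U$. Because $S$ is semi-local, this happens if and only if $\phi(x_i) \cdot \gamma \in G \! \setminus \! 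U$ at each of the finitely many closed points $x_1, \dots, x_n$ of $\Spec(S)$.

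To choose such a $\gamma$, I would for each $i$ consider the morphism $\sigma_i \colon \Gamma_{\kappa_i} \to (G \! \setminus \! \Gamma)_{\kappa_i}$, $\gamma \mapsto \phi(x_i) \cdot \gamma$, where $\kappa_i = \kappa(x_i)$. Since right translation on $G \! \setminus \! \Gamma$ is transitive, $\sigma_i$ is surjective, so its preimage $W_i \subset \Gamma_{\kappa_i}$ of the open $(G \! \setminus \! U)_{\kappa_i}$ is non-empty and open. The projection $\Gamma_{\kappa_i} \to \Gamma$ is flat and quasi-compact, hence open, so the image $\overline{W}_i \subset \Gamma$ is a non-empty open, and a $k$-point of $\overline{W}_i$ has its unique $\kappa_i$-lift sitting in $W_i$. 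Since $\Gamma$ is smooth, connected, and carries the $k$-rational identity element, it is geometrically irreducible, so $V := \overline{W}_1 \cap \dots \cap \overline{W}_n$ is a non-empty open of $\Gamma$. By the density hypothesis on $\Gamma(k)$, it contains a $k$-point $\gamma$, and $f_\gamma$ is the required $G$-equivariant morphism $T \to U$.

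The main obstacle I expect is in (b): a single $\gamma \in \Gamma(k)$ has to work at every closed point of $\Spec(S)$ simultaneously. This is what forces the combination of geometric irreducibility of $\Gamma$ (so that the finitely many non-empty opens $\overline{W}_i$ still intersect non-trivially) with the density hypothesis on $\Gamma(k)$ (to produce a $k$-rational point in the intersection).
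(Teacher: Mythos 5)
Your part (a) is exactly the paper's argument: apply Lemma~\ref{lem.long} to $G \hookrightarrow \Gamma$, realize $\tau$ as the pull-back of the $G$-torsor $\Gamma \to G \! \setminus \! \Gamma$ along a map $\Spec(S) \to G \! \setminus \! \Gamma$, and take the induced $G$-equivariant map to $\Gamma$. For part (b) you follow the same overall strategy as the paper (compose $f$ with a right translation by a suitable $\gamma \in \Gamma(k)$, reduce to the finitely many closed points of $\Spec(S)$ via specialization, and use irreducibility of $\Gamma$ plus density of $\Gamma(k)$ to find one $\gamma$ that works at all closed points simultaneously), but you certify that the ``good'' locus of translations for each closed point is a non-empty open of $\Gamma$ in a genuinely different way. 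The paper takes the Zariski closures $X_i$ of the images of the closed points, sets $Z = (G \! \setminus \! \Gamma) \setminus \pi(U)$, and invokes Kleiman's transversality theorem over $\overline{k}$ to show that the locus $\{g : g(X_i) \subset Z_j\}$ is a proper closed subvariety for each irreducible component $Z_j$ of $Z$. You instead work with the single residue-field point $\phi(x_i)$ over $\kappa_i$, use transitivity of the right $\Gamma$-action on $G \! \setminus \! \Gamma$ to see that the orbit map $\sigma_i$ is surjective, hence that $W_i = \sigma_i^{-1}((G \! \setminus \! U)_{\kappa_i})$ is a non-empty open, and then push it down along $\Gamma_{\kappa_i} \to \Gamma$. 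This is more elementary (no transversality theorem, no passage to $\overline{k}$) and is correct; the one imprecision is your justification that $\Gamma_{\kappa_i} \to \Gamma$ is open: ``flat and quasi-compact'' is not by itself a reason for openness (that criterion requires local finite presentation). The statement you need is nonetheless true for a different standard reason: any morphism to the spectrum of a field is universally open (EGA~IV, 2.4.9), so base change along $\Spec(\kappa_i) \to \Spec(k)$ is an open map. With that citation corrected, your argument is complete, and the singleton-fiber observation correctly converts a $k$-point of the image $\overline{W}_i$ into a $\kappa_i$-point of $W_i$.
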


\begin{rmk}\label{rmk.3special} If $\Gamma$ is (3)-special, then  $\Gamma$ is affine by Proposition \ref{prop.non-linear}(b),  so $\Tors(S, \Gamma) =H^1(S,G)= 1$, and Lemma~\ref{lem.gln}
applies to every $G$-torsor $\tau$ over a semi-local ring $S$. In this case part (a) of the Lemma is equivalent to the assertion that
the (left) $G$-torsor $\pi \colon \Gamma \to G \!  \setminus \! \Gamma$ is weakly (3)-versal, and part (b) is equivalent 
to the assertion that $\pi$ is (3)-versal.
\end{rmk}

\begin{rmk} \label{rmk.unirational}
Assume $\Gamma$ is linear and $k$ is infinite, and furthermore, $\Gamma$ is reductive or $k$ is perfect. 
Then the condition that $\Gamma(k)$ is dense in $\Gamma$ in part (b) of the lemma
is automatic because $\Gamma$ is unirational over $k$; see \cite[Theorem 18.2]{borel}.
\end{rmk}

\begin{proof}[Proof of Lemma~\ref{lem.gln}] (a) Let $X = G \!  \setminus \! \Gamma$. 
The natural projection $\pi \colon \Gamma \to X$ is a $G$-torsor and $\pi(U)$ is a dense open subvariety of $X$.
By Lemma~\ref{lem.long}, $\tau$ lies in the image of the morphism $X(S) \to H^1(S, G)$. This means that 
$\tau$ is the pull-back of $\pi$ via a morphism $\overline{\alpha} \colon \Spec(S) \to X$. In other words, 
there exists a Cartesian diagram 
\[ \xymatrix{  T \ar@{->}[r]^{\alpha} \ar@{->}[d]_{\tau} &   \Gamma \ar@{->}[d]_{\pi}  \\
         \Spec(S) \ar@{->}[r]^{\quad \overline{\alpha}}  & X. }  
\]
Here ${\alpha} \colon T \to \Gamma$ is a $G$-equivariant morphism. 

(b) Set $Z = X \setminus \pi(U)$. Let $X_1, \dots, X_m \subset X$ denote
the Zariski closures of the images of the closed points of $S$ under $\overline{\alpha}$ (with the reduced scheme structure). Note that $\Gamma$ acts on $X = G \! \setminus \! \Gamma$ by right translations.

We claim that there exists a $g \in \Gamma(k)$ such that $g(X_i) \not \subset Z$ for every $i = 1, \dots, m$.  
If we can prove this claim, then the composition $f = t_g \circ \alpha \colon T \to \Gamma$ is a $G$-equivariant morphism 
and its image lies in $U$, as desired. Here
$t_g \colon \Gamma \to \Gamma$ denotes right multiplication by $g^{-1}$, $t_g(\gamma) = \gamma \cdot g^{-1}$. 

It remains to prove the claim. Denote the irreducible components of $Z$ by $Z_1, \ldots, Z_n$.
Then $g(X_i) \not \subset Z$ if and only if $g(X_i) \not \subset Z_j$ for any $j = 1, \dots, n$.
The points $g \in \Gamma(k)$ such that $g(X_i) \subset Z_j$ are the $k$-points of a closed 
subvariety $\Lambda_{ij} \subset \Gamma$. Since $\Gamma(k)$ is dense in $\Gamma$, it suffices to show that 
\[
\bigcup_{i, j} \Lambda_{ij} \neq \Gamma, 
\]
where the union is taken over all $i = 1, \ldots, m$ and $j = 1, \ldots, n$. Equivalently, it suffices to show that
\begin{equation} \label{e.lambda}
\text{$\Lambda_{ij} \neq \Gamma$ for every $i = 1, \ldots, m$ and $j = 1, \ldots, n$.}
\end{equation}
For the purpose of proving~\eqref{e.lambda}, we may pass to the algebraic closure of $k$ 
and thus assume that $k$ is algebraically closed.
By Kleiman's Tranversality Theorem~\cite[Theorem 2]{kleiman} there is a dense open 
subvariety $O_{ij} \subset \Gamma$ such that $g(X_i)$ intersects
$Z_j$ transversely for any $g \in O_{ij}(k)$. Since $Z_j \neq X$, this 
implies that $g(X_i) \not \subset Z_j$ for any $g \in O_{ij}(k)$.
Thus $\Lambda_{ij}$ lies in the complement of $O_{ij}$ in $\Gamma$, and~\eqref{e.lambda} follows.
This completes the proof of the claim and thus of part (b). 
\end{proof}

\begin{rmk} \label{rem.finite-type}
As a consequence of Lemma~\ref{lem.gln}(a), we see that Serre's original definition of special 
group, which was discussed in the Introduction, is equivalent to our notion of (2)-special group. 
In other words, the following conditions on an algebraic group $G$ defined over a field $k$ are equivalent:

\smallskip
(2) $H^1(R, G) = 1$ for any local ring $R$ containing $k$, and

\smallskip
(2$'$) $G$ is smooth and every $G$-torsor $\pi \colon Y \too X$, where $X$ is a reduced algebraic variety over $k$,  is a Zariski torsor, i.e., is locally trivial in the Zariski topology.

\smallskip
Proof of the implication (2) $\Longrightarrow$ (2$'$). Assume that $G$ satisfies (2). Then $G$ is smooth 
by Proposition \ref{prop.non-linear}(a), and (2$'$) readily follows.

\smallskip
Proof of the implication (2$'$) $\Longrightarrow$ (2). Assume that $G$ satisfies (2$'$). We claim that $G$ is affine. To prove this claim we may pass to the algebraic closure of $k$, i.e., assume that $k$ is algebraically closed. In this case $G$ is linear by~\cite[Theorem 1]{serre-special}. This proves the claim. We conclude that $G$ can be embedded as a closed subgroup scheme 
of $\GL_n$, for some $n$. Since the homogeneous space $G \! \setminus \! \GL_n$ is a reduced algebraic variety over $k$, and $G$ satisfies (2$'$), $\pi \colon \GL_n \too G \! \setminus \! \GL_n$ is a Zariski torsor.
Now consider an arbitrary $G$-torsor $\tau: Y \too \Spec R$, where $R$ a local ring containing $k$. Our goal is to show that
$\tau$ is split. By Lemma~\ref{lem.examples}(a) and Remark~\ref{rmk.3special}, 
$\pi \colon \GL_n \too G \! \setminus \! \GL_n$ is a weakly versal $G$-torsor.
In particular, $\tau$ is obtained from $\pi$ by pull-back via some morphism $\Spec R \too G \! \setminus \! \GL_n$.  
Since $\pi$ is locally trivial in the Zariski topology, this tells us that $\tau$ is split.
\qed
\end{rmk}

We are now ready to finish the proof of Theorem~\ref{thm.main2}. Part (b) is an immediate consequence of (a) and the definition of versality.
In part (a), the implications 
\[ \text{$\tau$ is weakly (3)-versal} \Longrightarrow 
 \text{$\tau$ is weakly (2)-versal} \Longrightarrow
  \text{$\tau$ is weakly (1)-versal} \]
 are obvious. So, we will assume that $\tau \colon V \to Y$ is a weakly (1)-versal $G$-torsor and will aim to show that
 $\tau$ is weakly (3)-versal.
 
Recall that we are assuming that $G$ is a linear algebraic group, i.e., a closed subgroup of $\GL_n$ for some $n \geqslant 1$.
Set $X = G \! \setminus \! \GL_n$, let $\pi \colon \GL_n \to X$ be the natural projection and 
$\eta$ be the generic point of $X$. Since $\tau$ is weakly (1)-versal, $\pi_{\eta}$ is the pull-back of $\tau$. 
That is, over some dense open subvariety $X_0 \subset X$ defined over $k$, 
$\pi$ is the pull-back of $\tau$, via a Cartesian diagram 
\[ \xymatrix{   \GL_n \ar@{->}[d]_{\pi}  & U_0 \ar@{->}[d]_{\pi} \ar@{_{(}->}[l]_{\text{open}}  \ar@{->}[rr]^{\phi} &  & V \ar@{->}[d]_{\tau'}  \\
 G \! \setminus \! \GL_n & \ar@{_{(}->}[l]_{\text{open}}  X_0 \ar@{->}[rr]^{\overline{\phi}} & & Y. }  
\]
Here $U_0 = \pi^{-1}(X_0)$. Now suppose $S$ is a semi-local ring containing $k$ and $\tau_3 \colon T_3 \to \Spec(S)$ be a $G$-torsor. 
(Here the ``3" in the subscript indicates that we are testing for (3)-versality.)
By Lemma~\ref{lem.examples}(a), $\GL_n$ is (3)-special. Moreover, since $k$ is infinite, $\GL_n(k)$ is dense in $\GL_n$. 
Applying Lemma~\ref{lem.gln} with $\Gamma = \GL_n$,
we conclude that there exists a $G$-equivariant morphism $f \colon T_3 \to U_0$. 
Composing $f$ and $\phi$ we obtain a Cartesian diagram
\[ \xymatrix{  T_3 \ar@{->}[r]^{f} \ar@{->}[d]_{\tau_3} &  U_0 \ar@{->}[d]_{\pi} \ar@{->}[rr]^{\phi} & & V \ar@{->}[d]_{\tau}  \\
         \Spec(S) \ar@{->}[r]^{\quad \overline{f}} &  X_0 \ar@{->}[rr]^{\overline{\phi}} & & Y }  
\]
which shows that $\tau$ is (3)-versal. 
\qed

\section{Proof of Theorem~\ref{thm.main1}}
\label{sect.finite}

The implications (3) $\Longrightarrow$ (2) $\Longrightarrow$ (1) are obvious, so we will focus on showing that (1) $\Longrightarrow$ (3).
Suppose $G$ is (1)-special. Then by Proposition~\ref{prop.non-linear}, $G$ is smooth, linear and connected. Our goal is to show that $G$ is (3)-special.
We will consider three cases.

\smallskip
{\bf Case 1:} The base field $k$ is infinite. Since $G$ is (1)-special, the trivial torsor $\pi \colon G \to \Spec(k)$ is weakly (1)-versal. By Theorem~\ref{thm.main2},
$\pi$ is also weakly (3)-versal. In other words, for any local ring $S$ containing $k$ and any $G$-torsor $\mu \colon T \to \Spec(S)$, there exists a Cartesian diagram
\[ \xymatrix{   T \ar@{->}[d]_{\mu}  \ar@{->}[rr] &  & G \ar@{->}[d]^{\pi}  \\
                 \Spec(S)  \ar@{->}[rr] & & \Spec(k), }  
\]
We conclude that $\mu$ is split. Thus shows that $G$ is (3)-special, as desired.

\smallskip
{\bf Case 2:} $G$ is a connected reductive group defined over a finite field $k$. By~\cite[Proposition 16.6]{borel}, a reductive group over a finite field $k$ 
is quasi-split, i.e., has a Borel subgroup defined over $k$. This allows us to appeal to the following result, due to M.~Huruguen~\cite[Proposition 15]{huruguen}:

\smallskip
{\em A quasi-split reductive linear algebraic group $G$
over $k$ is (1)-special if and only if there exists an exact sequence of the form
\[  \xymatrix{  1 \ar@{->}[r] & H \ar@{->}[r]  &  G  \ar@{->}[r] & T \ar@{->}[r] & 1,} \]
where $T$ is a (1)-special $k$-torus, $H = H_1 \times \ldots \times H_n$, and each $H_i$ is of the form $R_{l_i/k}(\SL_{m_i})$ or $R_{l_i/k}(\Sp_{2n_i})$, for some field extension $l_i/k$ of finite degree.}

\smallskip
By Lemma~\ref{lem.torus2}, $T$ is (3)-special. We claim that $H$ is also (3)-special. If we can prove this claim, then
applying Lemma~\ref{lem.torus1}(a) to the above sequence, we will be able to conclude that $G$ is (3)-special, as desired.

To prove the claim, recall that by Lemma~\ref{lem.examples}, $\SL_n$ and $\Sp_{2n}$ are (3)-special for every $n$. By Lemma~\ref{lem.torus1}(c),
each $H_i$ is (3)-special, and by Lemma~\ref{lem.torus1}(b), $H$ is (3)-special, as claimed.

\smallskip
{\bf Case 3:} $G$ is an arbitrary connected smooth linear algebraic group defined over a finite field $k$.
Let $U$ be the unipotent radical of $G$. Recall that $U$ is defined as the largest smooth connected normal unipotent subgroup of $G$.
Since $k$ is perfect, $U$ is defined over $k$ and is $k$-split; 
see~\cite[Corollary V.15.5(ii)]{borel}. The quotient $\overline{G} = G/U$ is a reductive group over $k$. By~\cite[Lemma 1.13]{sansuc}, 
the natural morphism $H^1(K, G) \to H^1(K, \overline{G})$ is a bijection for any field $K$ containing $k$. By our assumption $G$ is (1)-special,
so $\Tors(K,G)=H^1(K, G) = 1$. Hence, $H^1(K, \overline{G}) = 1$ as well. We conclude that $\overline{G}$ is (1)-special. 

Now by Case 2, $\overline{G}$ is (3)-special. By
Lemma~\ref{lem.examples}(d), $U$ is also (3)-special. Applying Lemma~\ref{lem.torus1}(a) to the exact 
sequence $1 \to U \to G \to \overline{G} \to 1$, we conclude that $G$ is (3)-special. This completes the proof of Theorem~\ref{thm.main1}.
\qed

\begin{rmk} \label{rem.grothendieck} Our argument in Case 2
uses~\cite[Proposition 15]{huruguen}, whose proof, in turn, relies on Grothendieck's 
classification of special groups over an algebraically closed field~\cite[Theorem 3]{grothendieck-special}.
In using Grothendieck's classification,~Huruguen implicitly assumed that every (1)-special group $G$ over an algebraically 
closed field is (2)-special. This does not cause a problem though, either for us or in~\cite{huruguen}, since we 
established the equivalence of (1) and (2) for groups over an infinite field in Case 1
by a self-contained argument.
Alternatively, the equivalence of (1) and (2) over an infinite perfect field
can be deduced from the variant of the Grothendieck-Serre conjecture proved by Colliot-Th\'el\`ene and 
Ojanguren in~\cite[Theorem 3.2]{cto}. \qed
\end{rmk} 

\section{Proof of Corollary~\ref{cor.main3}}
\label{sect.cor.main3}

Let $k$ be an infinite field, and $G$ be a linear algebraic group over $k$.
We may assume that $G$ is a closed subgroup of $\GL_n$. Let $X=G \! \setminus \! \GL_n$, $K = k(X)$ and $\pi_K$ be the restriction of $\pi$ to the generic point $\eta \colon \Spec(K) \to X$ of $X$. 
Then $\ed(\pi_K) = \ed(G)$; see~\cite[Proposition 3.11]{merkurjev-survey} or~\cite[Theorem 3.4]{reichstein}. This means that 
there exists an intermediate subfield $k \subset K_0 \subset K$ and a pull-back diagram
 \[ \xymatrix{ \GL_n \ar@{->}[d]_{\pi}  & \pi^{-1}(\Spec(K)) \ar@{->}[r] \ar@{_{(}->}[l] \ar@{->}[d]^{\pi_K} & T_0 \ar@{->}[d]   \\
 X  &  \Spec(K) \ar@{_{(}->}[l]^{\eta \quad } \ar@{->}[r]  &  \Spec(K_0) } \]
such that $T_0 \to \Spec(K_0)$ is a $G$-torsor and $\trdeg_k(L_0) = \ed(G)$. Since $K$ is a finitely generated field extension of $k$, so is $K_0$.
In other words, there exists a dense open subvariety $X_0 \subset X$ defined over $k$, such that over $X_0$, $\pi$ is the pull-back of $\tau$, via a Cartesian diagram 
\[ \xymatrix{  \GL_n \ar@{->}[d]_{\pi}  & U_0 \ar@{->}[d]_{\pi} \ar@{_{(}->}[l]_{\text{open}}  \ar@{->}[rr]^{\phi} &  & W \ar@{->}[d]_{\nu}  \\
                 X & \ar@{_{(}->}[l]_{\text{open}}  X_0 \ar@{->}[rr]^{\overline{\phi}} & & Y, }  
\]
where $U_0 = \pi^{-1}(X_0)$, $Y$ is a geometrically integral scheme of finite type over $k$ of dimension $\dim_k(Y) = \ed(\pi) = \ed(G)$, the function field of $Y$ is $K_0$, $\nu$ is a $G$-torsor and the map $\overline{\phi}$ is dominant. 
Note that since $k$-points are dense in $\GL_n$, they are also dense in $W$. Now using Lemma~\ref{lem.gln}(b), as we did in the proof Theorem~\ref{thm.main2},
we see that for every semi-local ring $S$ and every torsor $\tau \colon T \to \Spec(S)$, $\tau$ can be obtained by pull-back from $\nu$. 
\qed

\begin{rmk} The above argument shows that the $G$-torsor $\nu \colon W \to Y$ in the statement of Corollary~\ref{cor.main3} 
can be chosen to be versal and independent of the choice of $S$ or $\tau$.  Here ``versal" means ``(1)-versal", ``(2)-versal" or ``(3)-versal"; these notions are equivalent
by Theorem~\ref{thm.main2}.
\end{rmk}

\section{Some evidence for Conjecture~\ref{conj.main4}}
\label{sect.conj.main4}
 
Let $k$ be an algebraically closed field. Our main observation is the following.

\begin{rmk} Conjecture~\ref{conj.main4} 
holds if (a) $\ed(\tau_K) = 0$ or (b) $\ed(\tau_K) = \ed(G)$.

\smallskip
Indeed, in case (a) Conjecture~\ref{conj.main4} reduces to the variant of the Grothendieck-Serre Conjecture~\ref{conj.g-s} proved in~\cite[Theorem 3.2]{cto} and in case
(b) Conjecture~\ref{conj.main4} reduces to Corollary~\ref{cor.main3}. In particular, if 
\begin{equation} \label{e.ed}
\text{$\ed(\alpha) = 0$ or $\ed(G)$}
\end{equation} 
for every field $K$ containing $k$ and every $G$-torsor $\alpha \colon X \to \Spec(K)$, then Conjecture~\ref{conj.main4} is satisfied
for every local ring $R$ and every $G$-torsor $T \to \Spec(R)$.
\end{rmk}

Condition~\eqref{e.ed} is obviously satisfied if $\ed(G) = 0$ (i.e., $G$ is a special group; see Corollary~\ref{cor.special-ed}) or $\ed(G) = 1$. Other examples are given below.
For simplicity, we will assume that $\cha(k) \neq 2$ or $3$.

\begin{prop} Conjecture~\ref{conj.main4} holds if 

\smallskip
(a) $G$ is the projective linear group $\PGL_n$, for $n = 2$, $3$ or $6$, or 

\smallskip
(b) $G$ is the exceptional group $G_2$. 
\end{prop}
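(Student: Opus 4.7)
The plan is to invoke the criterion from the preceding Remark: it suffices to verify, for each of the four groups $G$, condition~\eqref{e.ed}, namely the dichotomy
\[ \ed_k(\alpha) \in \{0,\, \ed_k(G)\} \]
for every $G$-torsor $\alpha$ over every field $K \supseteq k$. Once this dichotomy is established, Conjecture~\ref{conj.main4} follows at once: the case $\ed(\tau_K) = 0$ is handled by the Colliot-Th\'el\`ene--Ojanguren theorem \cite[Theorem 3.2]{cto}, and the case $\ed(\tau_K) = \ed(G)$ by Corollary~\ref{cor.main3}.

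For part (a), $\PGL_n$-torsors over $K$ correspond to central simple $K$-algebras of degree $n$. Since $k$ is algebraically closed, Tsen's theorem gives $\on{Br}(K_0) = 0$ for every extension $K_0/k$ with $\trdeg(K_0/k) \leqslant 1$, so every non-split torsor automatically satisfies $\ed(\alpha) \geqslant 2$. The matching upper bounds come from explicit parametric families: the quaternion presentation $(a, b)_K$ for $n = 2$; the Wedderburn cyclicity $A \cong (K(\sqrt[3]{a})/K, \sigma, b)$ for $n = 3$; and, for $n = 6$, the Brauer-class factorization $A \cong Q_2 \otimes Q_3$ combined with cyclicity of both tensor factors. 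In each case one obtains $\ed(\PGL_n) = 2$, and the dichotomy $\ed(\alpha) \in \{0, 2\}$ follows.

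For part (b), $G_2$-torsors correspond to octonion (Cayley) algebras, and an octonion algebra is determined by its norm form, a $3$-fold Pfister form $\langle\langle a, b, c\rangle\rangle$. Cayley--Dickson doubling gives $\ed(\alpha) \leqslant 3$. Conversely, over any extension $K_0 / k$ with $\trdeg(K_0/k) \leqslant 2$ one has $\on{cd}(K_0) \leqslant 2$, so $H^3(K_0, \mu_2) = 0$ and every $3$-fold Pfister form over $K_0$ is hyperbolic. Hence every octonion algebra over such a $K_0$ splits, forcing every non-split $G_2$-torsor to satisfy $\ed(\alpha) \geqslant 3$. This yields the dichotomy $\ed(\alpha) \in \{0, 3\} = \{0, \ed(G_2)\}$.

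The main obstacle will be the $n = 6$ case of part (a): unlike $n = 2, 3$, where a single cyclicity statement furnishes the upper bound, for $n = 6$ one must descend the quaternion factor $Q_2$ and the cubic cyclic factor $Q_3$ \emph{simultaneously} to a common two-dimensional base. Organizing this joint descent so that it realizes every degree-$6$ central simple algebra -- and thereby establishing $\ed(\PGL_6) \leqslant 2$ -- is the technical crux of the argument.
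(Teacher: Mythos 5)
Your overall strategy coincides with the paper's: reduce to the dichotomy~\eqref{e.ed} from the preceding remark, and verify it by showing that a nontrivial torsor cannot exist over any field of transcendence degree $< \ed(G)$ (Tsen for $\PGL_n$, the $C_2$-property or, in your version, Arason's theorem plus $\on{cd}\leqslant 2$ for $G_2$). Part (b) and the cases $n=2,3$ of part (a) are complete modulo standard facts. The one substantive difference is that the paper simply quotes $\ed(\PGL_n)=2$ for $n=2,3,6$ from \cite[Lemma 9.4]{reichstein}, whereas you undertake to reprove it.

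That choice leaves a genuine gap at $n=6$, and you flag it yourself: the upper bound $\ed(\PGL_6)\leqslant 2$ is never established, only announced as ``the technical crux.'' Moreover, the route you sketch --- cyclic presentations of the two primary components $Q_2$ and $Q_3$ followed by a ``joint descent'' --- does not lead to the bound $2$: descending each factor separately costs two parameters apiece, giving only $\ed\leqslant 4$, and there is no a priori reason the two descents can be arranged over a common two-dimensional base. The argument that actually works (and underlies \cite[Lemma 9.4]{reichstein}) is different: write the underlying division algebra as $D_2\otimes D_3$ by primary decomposition; $D_2$ is quaternion and $D_3$ is cyclic by Wedderburn, so each is split by a cyclic extension of $K$, and the compositum of these two splitting fields is cyclic of degree dividing $6$ and splits $D$. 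Hence $D$ is itself a cyclic algebra, and since $\mu_6\subset k\subseteq K$ (recall $\cha(k)\neq 2,3$ and $k=\overline{k}$), Kummer theory exhibits it as a symbol algebra $(e,f)_m$ with $m\mid 6$, which visibly descends to $k(e,f)$; the non-division degree-$6$ algebras are then handled by descending their underlying division algebras. Inserting this argument --- or simply the citation the paper uses --- closes the gap; everything else in your proposal is sound.
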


\begin{proof}
In view of~\eqref{e.ed}, it suffices to show that $H^1(K, G) = 1$ for every field extension $K/k$ of transcendence degree $< \ed(G)$.

(a) Here $\ed(G) = 2$; see~\cite[Lemma 9.4]{reichstein}. Moreover, $H^1(K, \PGL_n)$ is in a natural bijective correspondence with isomorphism classes of central simple algebras of degree $n$ over $K$. By Tsen's theorem, $H^1(K, \PGL_n) = 1$ for any $K/k$ of transcendence degree 
$\leqslant 1$.

(b) Recall that $H^1(K, G_2)$ is in a bijective correspondence with isomorphism classes of $3$-fold Pfister forms $\langle \langle a, \, b, \, c \rangle \rangle$ over $K$
and $\ed(G_2) = 3$. Let $K/k$ be a field extension of transcendence degree $\leq 2$. By the Tsen-Lang theorem every $3$-fold Pfister form 
is isotropic and hence, hyperbolic over $K$; cf.~\cite[Theorem 11.2]{reichstein} and its proof. Thus $H^1(K, G_2) = 1$, as desired. 
\end{proof}
\section*{Acknowledgements} The authors are grateful to Shamil Asgarli, Michel Brion and Baptiste Morin for stimulating discussions and to the referee for a careful reading of the paper and constructive remarks. 

\emergencystretch=2em

\emergencystretch=2em
\renewcommand{\markboth}[2]{}
\end{document}